\documentclass[12pt]{amsart}
\usepackage{amssymb}
\usepackage{hyperref}

\textwidth=160mm
\textheight=200mm
\topmargin=20mm
\hoffset=-20mm

\newtheorem{theorem}{Theorem}[section]
\newtheorem{definition}[theorem]{Definition}
\newtheorem{proposition}[theorem]{Proposition}
\newtheorem{lemma}[theorem]{Lemma}

\begin{document}

\title{The glow of Fourier matrices: universality and fluctuations}

\author{Teodor Banica}
\address{T.B.: Department of Mathematics, Cergy-Pontoise University, 95000 Cergy-Pontoise, France. {\tt teo.banica@gmail.com}}

\subjclass[2000]{15B34 (60B15)}
\keywords{Hadamard matrix, Random matrix}

\begin{abstract}
The glow of an Hadamard matrix $H\in M_N(\mathbb C)$ is the probability measure $\mu\in\mathcal P(\mathbb C)$ describing the distribution of $\varphi(a,b)=<a,Hb>$, where $a,b\in\mathbb T^N$ are random. We prove that $\varphi/N$ becomes complex Gaussian with $N\to\infty$, and that the universality holds as well at order 2. In the case of a Fourier matrix, $F_G\in M_N(\mathbb C)$ with $|G|=N$, the universality holds up to order 4, and the fluctuations are encoded by certain subtle integrals, which appear in connection with several Hadamard-related questions. In the Walsh matrix case, $G=\mathbb Z_2^n$, we conjecture that the glow is polynomial in $N=2^n$.
\end{abstract}

\maketitle

\tableofcontents

\section*{Introduction}

A complex Hadamard matrix is a matrix $H\in M_N(\mathbb T)$, whose rows are pairwise orthogonal. The basic example is the Fourier matrix, $F_N=(w^{ij})$ with $w=e^{2\pi i/N}$. More generally, associated to a finite abelian group $G=\mathbb Z_{N_1}\times\ldots\times\mathbb Z_{N_k}$ is its Fourier matrix $F_G=F_{N_1}\otimes\ldots\otimes F_{N_k}$, which is a complex Hadamard matrix. In general, the complex Hadamard matrices are known to appear in connection with a wide array of questions, mainly coming from operator algebras and quantum physics. See \cite{ben}, \cite{jsu}, \cite{pop}, \cite{tzy}.

Two such matrices $H,K\in M_N(\mathbb T)$ are called equivalent if one can pass from one to the other by permuting rows and columns, or by multiplying the rows and columns by numbers $z\in\mathbb T$. In the case where both Hadamard matrices are binary, $H,K\in M_N(\pm1)$, it is customary to use for the equivalence relation binary scalars only, $z\in\{\pm1\}$.

The glow is an analytic invariant introduced in \cite{ban}, inspired from the Gale-Berlekamp game \cite{fsl}, \cite{rvi}, and from the notion of numerical range \cite{cgl}, \cite{dgh}. The glow of $H\in M_N(\mathbb T)$ is by definition the complex probability measure $\mu\in\mathcal P(\mathbb C)$ describing the distribution of the total sum of the entries, $\Omega=\sum_{ij}H_{ij}$, over the equivalence class of $H$.

In order to understand this notion, let us first consider a binary matrix $H\in M_N(\pm1)$, and define $\mu$ as above, but with respect to the usual equivalence relation for binary matrices. It is useful to think of $\mu$ as being the ``glow'' of the matrix, in the following way. Assume that we have a square city, with $N$ horizontal streets and $N$ vertical streets, and with street lights at each crossroads. When evening comes the lights are switched on at the positions $(i,j)$ where $H_{ij}=1$, and then, all night long, they are randomly switched on and off, with the help of $2N$ master switches, one at the end of each street:
$$\begin{matrix}
\to&&\diamondsuit&\diamondsuit&\diamondsuit&\diamondsuit\\
\to&&\diamondsuit&\times&\diamondsuit&\times\\
\to&&\diamondsuit&\diamondsuit&\times&\times\\
\to&&\diamondsuit&\times&\times&\diamondsuit\\
\\
&&\uparrow&\uparrow&\uparrow&\uparrow
\end{matrix}$$ 

With this picture in mind, $\mu$ describes indeed the glow of the city.

Now back to the complex case, all the scalars will now belong to $\mathbb T$ instead of $\pm1$, but the above interpretation will somehow subsist, and this is why we call our invariant ``glow''. Observe that the glow is by construction rotationally invariant, so we are in fact basically interested in computing a probability measure supported by $\mathbb R_+$.

As already mentioned, there are some obvious connections with the Gale-Berlekamp game \cite{fsl}, \cite{rvi}, and with the notion of numerical range \cite{cgl}, \cite{dgh}. Yet another motivation comes from the operator algebra problematics in \cite{jsu}, \cite{pop}. Indeed, the spectral measure $\eta$ of the subfactor associated to $H\in M_N(\mathbb T)$ depends as well only on the equivalence class of $H$, and one may wonder whether is there is a deeper relation between $\mu,\eta$.

In order to further discuss the motivations, we will need the explicit formula of the moments of $\Omega=\sum_{ij}H_{ij}$. This formula, obtained by M\"obius inversion, is as follows:
$$\int_{\mathbb T^N\times\mathbb T^N}|\Omega|^{2p}=\sum_{\pi\in P(p)}K(\pi)N^{|\pi|}I(\pi)$$

Here $P(p)$ is the set of partitions of $\{1,\ldots,p\}$, and for $\pi\in P(p)$ we denote by $|\pi|$ the number of blocks, and we set $K(\pi)=\sum_{\sigma\in P(p)}\mu(\pi,\sigma)\binom{r}{\sigma}$, where $\mu$ is the M\"obius function. Regarding now $I(\pi)$, which is the key quantity in the above formula, this is: 
$$I(\pi)=\frac{1}{N^{|\pi|}}\sum_{[i]=[j]}<H_\pi(i),H_\pi(j)>$$

Here $[i]$ is the set with repetitions associated to a multi-index $i$, and we use the notation $H_\pi(i)=\bigotimes_{\beta\in\pi}\prod_{r\in\beta}H_{i_r}$, where $H_x\in\mathbb T^N$ are the rows of $H$. As a basic example, in the Fourier matrix case $H=F_G$, with $|G|=N$, these quantities are as follows:
$$I(\pi)=\int_{\mathbb T^N}\left(\prod_{\beta\in\pi}\sum_{\Sigma x_i=\Sigma y_i}\frac{a_{x_1}\ldots a_{x_{|\beta|}}}{a_{y_1}\ldots a_{y_{|\beta|}}}\right)da$$

With these formulae in hand, we can now explain our precise motivations for investigating the glow. We have in fact two main motivations, as follows:
\begin{enumerate}
\item Invariant theory. The moment formula shows that $\mu$ depends only on the quantities $<H_\pi(i),H_\pi(j)>$, and the same is known from \cite{bfs} to hold for the spectral measure $\eta$. Thus, we have evidence for a deeper relation between $\mu,\eta$.

\item Counting problems. In the Fourier matrix case, $H=F_G$ with $|G|=N$, the above quantities $I(\pi)$ are those coming from the work in \cite{bcs}, \cite{bns}, which control the number of $G$-patterned complex Hadamard matrices.
\end{enumerate}

Summarizing, we have reasons to believe that $\mu$ is an interesting invariant, and that its exact computation for the Fourier matrices $F_G$ is of particular interest. In this paper we will perform an asymptotic study of $\mu$, our conclusions being as follows:
\begin{enumerate}
\item $\Omega/N$ becomes complex Gaussian in the $N\to\infty$ limit.

\item A universality result holds as well at order $2$.

\item Within the class $\{F_G\}$, the universality holds up to order $4$.
\end{enumerate}

Perhaps the most surprising finding in this series is the last one. Here is the result, and we refer to the body of the paper for the precise statement:

\bigskip

\noindent {\bf Theorem.} {\em For a Fourier matrix $F_G$, with $|G|=N$, we have
$$\frac{1}{p!}\int_{\mathbb T^N\times\mathbb T^N}\left(\frac{|\Omega|}{N}\right)^{2p}=1+K_1N^{-1}+K_2N^{-2}+K_3N^{-3}+O(N^{-4})$$
with $K_1,K_2,K_3$ being certain polynomials in $p$, independent of $N$ and $G$.}

\bigskip

Regarding now the proof, this is based on the moment formula given above, and on a number of computations and estimates regarding the integrals $I(\pi)$. We believe that these computations and estimates can be of use in connection with the above-mentioned motivations, but for the moment we have no further results. Let us mention however that our computer simulations suggest to first look in detail at the Walsh matrix case.

The paper is organized as follows: 1 is a preliminary section, in 2-3 we state and prove our main results, and 4 contains a few concluding remarks.

\medskip

\noindent {\bf Acknowledgements.} I would like to thank Ion Nechita and Jean-Marc Schlenker for interesting discussions and recent joint work on related questions, and Beno\^it Collins for some help with a number of analytic issues.

\section{The binary glow}

An Hadamard matrix is a square matrix $H\in M_N(\pm 1)$, whose rows are pairwise orthogonal. The size of such a matrix must be $N=2$ or $N\in 4\mathbb N$. See \cite{sya}.

These matrices are usually taken under the following equivalence relation:

\begin{definition}
$H,K\in M_N(\pm 1)$ are called equivalent if one can pass from one to the other by permuting rows and columns, or switching signs on rows and columns.
\end{definition}

As explained in the introduction, we are interested in the total sum of the entries. Since this number is invariant under permutations of rows and columns, we can restrict attention to the matrices $\widetilde{H}\simeq H$ obtained by switching signs on rows and columns. More precisely, let $(a,b)\in\mathbb Z_2^N\times\mathbb Z_2^N$, and consider the following matrix:
$$\widetilde{H}_{ij}=a_ib_jH_{ij}$$

We will regard the sum of entries of $\widetilde{H}$ as a random variable, as follows:

\begin{definition}
Let $H\in M_N(\pm 1)$ be an Hadamard matrix.
\begin{enumerate}
\item We define $\varphi:\mathbb Z_2^N\times\mathbb Z_2^N\to\mathbb Z$ by $\varphi(a,b)=\sum_{ij}a_ib_jH_{ij}$.

\item We let $\mu$ be the probability measure on $\mathbb Z$ given by $\mu(\{k\})=P(\varphi=k)$.
\end{enumerate}
\end{definition}

In this definition $P$ denotes the probability with respect to the uniform measure on the group $\mathbb Z_2^N\times\mathbb Z_2^N$. In other words, we regard $\varphi$ as a random variable over this group, and we denote by $\mu$ the distribution of this random variable:
$$\mu(\{k\})=\frac{1}{4^N}\#\left\{(a,b)\in \mathbb Z_2^N\times\mathbb Z_2^N\Big|\varphi(a,b)=k\right\}$$

As explained in the introduction, $\mu$ can be thought of as being the ``glow'' of the matrix. In order to compute the glow, it is useful to have in mind the following picture:
$$\begin{matrix}
&&b_1&\ldots&b_N\\
&&\downarrow&&\downarrow\\
(a_1)&\to&H_{11}&\ldots&H_{1N}&\Rightarrow&S_1\\
\vdots&&\vdots&&\vdots&&\vdots\\
(a_N)&\to&H_{N1}&\ldots&H_{NN}&\Rightarrow&S_N
\end{matrix}$$

Here the columns of $H$ have been multiplied by the entries of the horizontal switching vector $b$, the resulting sums on rows are denoted $S_1,\ldots,S_N$, and the vertical switching vector $a$ still has to act on these sums, and produce the glow component at $b$.

\begin{proposition}
The glow of a matrix $H\in M_N(\pm 1)$ is given by
$$\mu=\frac{1}{2^N}\sum_{b\in\mathbb Z_2^N}\beta_1(c_1)*\ldots*\beta_N(c_N)$$
where $\beta_r(c)=\left(\frac{\delta_r+\delta_{-r}}{2}\right)^{*c}$, and $c_r=\#\{r\in|S_1|,\ldots,|S_N|\}$, with $S=Hb$.
\end{proposition}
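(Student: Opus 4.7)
The argument is a simple conditioning computation, so the plan is just to carry it out cleanly. Fix $b\in\mathbb Z_2^N$ and write $S=Hb\in\mathbb Z^N$. With this $b$ fixed, $\varphi(a,b)=\sum_i a_i S_i$ is a sum of independent random variables in $a$, since $a_1,\ldots,a_N$ are i.i.d.\ uniform in $\{\pm 1\}$. Each summand $a_iS_i$ takes the values $\pm S_i$ with equal probability, and since this law is symmetric about $0$, it coincides with $\frac{1}{2}(\delta_{|S_i|}+\delta_{-|S_i|})$.

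By independence, the conditional law of $\varphi(a,b)$ given $b$ is the convolution
$$\mathrm{Law}(\varphi(\cdot,b))=\prod_{i=1}^N{}^{*}\,\frac{\delta_{|S_i|}+\delta_{-|S_i|}}{2}.$$
Now group the indices $i$ by the value of $|S_i|$: for each $r\ge 0$, the number of $i$ with $|S_i|=r$ is exactly $c_r$, as defined in the statement. Grouping the convolution accordingly yields
$$\mathrm{Law}(\varphi(\cdot,b))=\bigast_{r\ge 1}\left(\frac{\delta_r+\delta_{-r}}{2}\right)^{*c_r}=\beta_1(c_1)*\cdots*\beta_N(c_N),$$
where the factor with $r=0$ contributes $\delta_0$ and can be absorbed. (Since $H$ is Hadamard, $|S_i|\le N$, so only the indices $r=1,\ldots,N$ are needed.)

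Finally, since $b$ is uniform on $\mathbb Z_2^N$, the unconditional law $\mu$ of $\varphi(a,b)$ is obtained by averaging the conditional laws:
$$\mu=\frac{1}{2^N}\sum_{b\in\mathbb Z_2^N}\mathrm{Law}(\varphi(\cdot,b))=\frac{1}{2^N}\sum_{b\in\mathbb Z_2^N}\beta_1(c_1)*\cdots*\beta_N(c_N),$$
which is the claimed formula. There is no real obstacle here; the only thing that requires care is the bookkeeping between the multi-set $\{|S_1|,\ldots,|S_N|\}$ and its representation via the exponents $c_r$, which is precisely what collapses the $N$-fold convolution into the stated product of convolution powers.
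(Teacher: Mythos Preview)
Your argument is correct and follows essentially the same route as the paper: condition on $b$, recognize $\varphi(\cdot,b)=\sum_i a_iS_i$ as a sum of independent symmetric Bernoulli-type variables so that its law is the convolution $\bigl(\frac{\delta_{|S_1|}+\delta_{-|S_1|}}{2}\bigr)*\cdots*\bigl(\frac{\delta_{|S_N|}+\delta_{-|S_N|}}{2}\bigr)$, regroup the factors according to the multiplicities $c_r$ of each value $r=|S_i|$, and then average over $b$. The only minor quibble is that the bound $|S_i|\le N$ holds for any $H\in M_N(\pm1)$, not just Hadamard ones (which is all the proposition assumes).
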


\begin{proof}
We use the interpretation of the glow which was explained above. So, consider the decomposition of the glow over $b$ components:
$$\mu=\frac{1}{2^N}\sum_{b\in\mathbb Z_2^N}\mu_b$$

With the notation $S=Hb$, the numbers $S_1,\ldots,S_N$ are the sums on the rows of the matrix $\widetilde{H}_{ij}=H_{ij}a_ib_j$. Thus the glow components are given by:
$$\mu_b=law\left(\pm S_1\pm S_2\ldots\pm S_N\right)$$

By permuting now the sums on the right, we have the following formula:
$$\mu_b=law\big(\underbrace{\pm 0\ldots\pm 0}_{c_0}\underbrace{\pm 1\ldots\pm 1}_{c_1}\ldots\ldots\underbrace{\pm N\ldots\pm N}_{c_N}\big)$$

Now since the $\pm$ variables each follow a Bernoulli law, and these Bernoulli laws are independent, we obtain a convolution product as in the statement.
\end{proof}

We will need the following elementary lemma:

\begin{lemma}
Let $H\in M_N(\pm1)$ be an Hadamard matrix of order $N\geq 4$. 
\begin{enumerate}
\item The sums of entries on rows $S_1,\ldots,S_N$ are even, and equal modulo $4$.

\item If the sums on the rows $S_1,\ldots,S_N$ are all $0$ modulo $4$, then the number of rows whose sum is $4$ modulo $8$ is odd for $N=4(8)$, and even for $N=0(8)$.
\end{enumerate}
\end{lemma}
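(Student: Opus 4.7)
The plan is to prove (1) by pairwise comparison of rows using orthogonality, and (2) by extracting information from the sum-of-squares identity $\sum_i S_i^2 = N^2$.

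For (1), I would fix a row $i$ and write $S_i = 2p_i - N$, where $p_i$ is the number of $+1$ entries in that row; since $N\in 4\mathbb{N}$ for $N\geq 4$, this immediately gives that $S_i$ is even. To compare $S_i$ with $S_j$ modulo $4$, I would exploit the orthogonality of the two rows: letting $a,b,c,d$ denote the number of columns $k$ where $(H_{ik},H_{jk})$ equals $(+1,+1),(+1,-1),(-1,+1),(-1,-1)$ respectively, the identities $a+b+c+d=N$ and $a-b-c+d=0$ give $b+c=N/2$, which is even. A direct computation yields $S_i - S_j = 2(b-c)$, and since $b-c$ has the same parity as $b+c$, the difference $S_i-S_j$ is divisible by $4$.

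For (2), the main observation is that $\sum_i S_i^2 = N^2$: this follows from column orthogonality (automatic for Hadamard matrices), since
$$\sum_i S_i^2 = \sum_i\Big(\sum_j H_{ij}\Big)^2 = \sum_{j,k}\sum_i H_{ij}H_{ik} = \sum_{j,k} N\delta_{jk} = N^2.$$
Under the hypothesis $S_i\equiv 0\pmod 4$, write $S_i=4u_i$ with $u_i\in\mathbb{Z}$; the identity becomes $\sum_i u_i^2 = N^2/16$. Now $S_i\equiv 4\pmod 8$ precisely when $u_i$ is odd, so using $u_i^2\equiv u_i\pmod 2$ the parity of the count $T$ of such rows agrees with the parity of $\sum_i u_i^2 = N^2/16$. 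A quick case split finishes: $N=8m$ gives $N^2/16=4m^2$ (even), while $N=8m+4$ gives $N^2/16=4m^2+4m+1$ (odd), matching the two cases in the statement.

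The argument is essentially bookkeeping once one spots the two orthogonality identities to use — row orthogonality for (1), and its column-orthogonality consequence for (2). No serious obstacle is anticipated; the only place requiring care is the parity step in (2), where one must remember the trick $u^2\equiv u\pmod 2$ to convert a sum of squares into a count modulo $2$.
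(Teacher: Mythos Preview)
Your argument for (1) is essentially the paper's: the same $a,b,c,d$ decomposition of a pair of rows, the same use of $a+b+c+d=N$ together with orthogonality to get $b+c=N/2$ even, and the same conclusion $S_i-S_j=2(b-c)\equiv 0\pmod 4$.

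For (2), however, your route is genuinely different from the paper's and in fact cleaner. The paper proceeds by an invariance argument: it first checks the statement for the row-dephased form of $H$ (where the row sums are $N,0,\ldots,0$), then observes that any $H$ satisfying the hypothesis can be reached from its dephased form by an even number of column sign switches grouped into pairs, and finally shows that a double column switch preserves the parity of the count (using orthogonality of the two switched columns to see that the number of rows whose sum jumps by $\pm 4$ is even). Your approach instead extracts everything from the single identity $\sum_i S_i^2=N^2$, which follows from column orthogonality; writing $S_i=4u_i$ and reducing modulo $2$ via $u_i^2\equiv u_i$ gives the parity of the count directly as $N^2/16\pmod 2$. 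What your method buys is brevity and a global rather than step-by-step computation; what the paper's method buys is a more hands-on picture of \emph{why} the parity is an invariant of the equivalence class, which fits naturally with the paper's subsequent use of sign-switching moves in Proposition~1.5.
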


\begin{proof}
(1) Let us pick two rows of our matrix, and then permute the columns such that these two rows look as follows:
$$\begin{pmatrix}
+\ldots+&+\ldots+&-\ldots-&-\ldots-\\
\underbrace{+\ldots+}_a&\underbrace{-\ldots-}_b&\underbrace{+\ldots+}_c&\underbrace{-\ldots-}_d
\end{pmatrix}$$

We have $a+b+c+d=N$, and by orthogonality $a+d=b+c$, so $a+d=b+c=\frac{N}{2}$. Now since $N/2$ is even, we conclude that $b=c(2)$, and this gives the result.

(2) In the case where $H$ is ``row-dephased'', in the sense that its first row consists of $1$ entries only, the row sums are $N,0,\ldots,0$, and so the result holds. In general now, by permuting the columns we can assume that our matrix looks as follows:
$$H=\begin{pmatrix}+\ldots+&-\ldots-\\ \underbrace{\vdots}_x&\underbrace{\vdots}_y\end{pmatrix}$$

We have $x+y=N=0(4)$, and since the first row sum $S_1=x-y$ is by assumption 0 modulo 4, we conclude that $x,y$ are even. In particular, since $y$ is even, the passage from $H$ to its row-dephased version $\widetilde{H}$ can be done via $y/2$ double sign switches.

Now, in view of the above, it is enough to prove that the conclusion in the statement is stable under a double sign switch. So, let $H\in M_N(\pm1)$ be Hadamard, and let us perform to it a double sign switch, say on the first two columns. Depending on the values of the entries on these first two columns, the total sums on the rows change as follows:
\begin{eqnarray*}
\begin{pmatrix}+&+&\ldots&\ldots\end{pmatrix}&:&S\to S-4\\
\begin{pmatrix}+&-&\ldots&\ldots\end{pmatrix}&:&S\to S\\
\begin{pmatrix}-&+&\ldots&\ldots\end{pmatrix}&:&S\to S\\
\begin{pmatrix}-&-&\ldots&\ldots\end{pmatrix}&:&S\to S+4
\end{eqnarray*}

We can see that the changes modulo 8 of the row sum $S$ occur precisely in the first and in the fourth case. But, since the first two columns of our matrix $H\in M_N(\pm1)$ are orthogonal, the total number of these cases is even, and this finishes the proof.
\end{proof}

Observe that Proposition 1.3 and Lemma 1.4 (1) show that the glow of an Hadamard matrix of order $N\geq 4$ is supported by $4\mathbb Z$. With this remark in hand, we have:

\begin{proposition}
Let $H\in M_N(\pm1)$ be an Hadamard matrix of order $N\geq 4$, and denote by $\mu^{even},\mu^{odd}$ the mass one-rescaled restrictions of $\mu\in\mathcal P(4\mathbb Z)$ to $8\mathbb Z,8\mathbb Z+4$.
\begin{enumerate}
\item At $N=0(8)$ we have $\mu=\frac{3}{4}\mu^{even}+\frac{1}{4}\mu^{odd}$.

\item At $N=4(8)$ we have $\mu=\frac{1}{4}\mu^{even}+\frac{3}{4}\mu^{odd}$.
\end{enumerate}
\end{proposition}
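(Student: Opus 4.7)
The plan is to compute directly the masses $\mu(8\mathbb{Z})$ and $\mu(8\mathbb{Z}+4)$; once these are known, the stated decomposition $\mu=\mu(8\mathbb{Z})\cdot\mu^{\mathrm{even}}+\mu(8\mathbb{Z}+4)\cdot\mu^{\mathrm{odd}}$ follows tautologically from the definition of the rescaled restrictions. I would condition on the horizontal switching vector $b\in\mathbb{Z}_2^N$. Since $H\cdot\mathrm{diag}(b)$ is again Hadamard, Lemma 1.4(1) applies to it: the entries of $S=Hb$ are all congruent modulo~$4$. Thus $\mathbb{Z}_2^N$ partitions into Case~A, where all $S_i\equiv 0\pmod 4$, and Case~B, where all $S_i\equiv 2\pmod 4$. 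Flipping any single coordinate $b_{j_0}$ alters each $S_i$ by $-2H_{ij_0}b_{j_0}\in\{\pm 2\}$, hence shifts the common residue modulo~$4$ by~$2$; this involution swaps the two cases, so each occurs with probability~$1/2$.

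Next I would analyse $\varphi(a,b)=\sum_i a_iS_i$ modulo~$8$ in each case. In Case~A, each $S_i$ is $0$ or $4\pmod 8$, and since $\pm 0\equiv 0$ and $\pm 4\equiv 4\pmod 8$, the sign $a_i$ is irrelevant modulo~$8$; so $\varphi\equiv 4m\pmod 8$, where $m=\#\{i:S_i\equiv 4\pmod 8\}$. Lemma 1.4(2) then tells us that $m$ is even when $N\equiv 0\pmod 8$ and odd when $N\equiv 4\pmod 8$, so in Case~A the variable $\varphi$ is deterministically in $8\mathbb{Z}$ in the first sub-case and in $8\mathbb{Z}+4$ in the second. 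In Case~B, each $S_i$ lies in $\{2,6\}\pmod 8$, and since negation exchanges these two residues, $a_iS_i$ is uniform on $\{2,6\}\pmod 8$, independently over $i$. Writing $a_iS_i\equiv 2+4X_i\pmod 8$ with $X_i$ i.i.d.\ Bernoulli$(1/2)$, and using $2N\equiv 0\pmod 8$ (as $N\in 4\mathbb{Z}$), one obtains $\varphi\equiv 4\sum_i X_i\pmod 8$. The parity of a Binomial$(N,1/2)$ with $N\ge 1$ is uniform, so conditionally on Case~B the variable $\varphi$ lies in $8\mathbb{Z}$ or $8\mathbb{Z}+4$ each with probability~$1/2$.

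Combining these yields, for $N\equiv 0\pmod 8$, the total mass $\mu(8\mathbb{Z})=\tfrac12\cdot 1+\tfrac12\cdot\tfrac12=\tfrac34$ and $\mu(8\mathbb{Z}+4)=\tfrac14$; for $N\equiv 4\pmod 8$ the Case~A contribution shifts to $8\mathbb{Z}+4$, giving $\mu(8\mathbb{Z})=\tfrac14$ and $\mu(8\mathbb{Z}+4)=\tfrac34$. The main conceptual step is extracting the correct mod-$8$ information in Case~A from Lemma 1.4(2); the Case~B analysis reduces to the trivial fact that a nontrivial binomial has uniform parity, and the balancing $P_A=P_B=1/2$ follows from a one-coordinate involution on $b$-space.
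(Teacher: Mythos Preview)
Your proof is correct and follows essentially the same approach as the paper's. The paper pairs sign vectors $(+c,-c)$ (i.e.\ flips the first coordinate of $b$) to couple the ``all $S_i\equiv 0\pmod 4$'' and ``all $S_i\equiv 2\pmod 4$'' cases, then handles the first via Lemma~1.4(2) and the second via the uniform parity of a sum of $N$ independent $\pm 1$'s---exactly your Case~A/Case~B analysis, with your one-coordinate involution playing the role of the paper's explicit pairing.
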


\begin{proof}
We use the glow decomposition over $b$ components, from Proposition 1.3:
$$\mu=\frac{1}{2^N}\sum_{b\in\mathbb Z_2^N}\mu_b$$

The idea is that the decomposition formula in the statement will occur over averages of the following type, over truncated sign vectors $c\in\mathbb Z_2^{N-1}$:
$$\mu'_c=\frac{1}{2}(\mu_{+c}+\mu_{-c})$$

Indeed, we know from Lemma 1.4 (1) that modulo 4, the sums on rows are either $0,\ldots,0$ or $2,\ldots,2$. Now since these two cases are complementary when pairing switch vectors $(+c,-c)$, we can assume that we are in the case $0,\ldots,0$ modulo 4. 

Now by looking at this sequence modulo 8, and letting $x$ be the number of 4 components, so that the number of 0 components is $N-x$, we have:
$$\frac{1}{2}(\mu_{+c}+\mu_{-c})=\frac{1}{2}\left(law(\underbrace{\pm0\ldots\pm 0}_{N-x}\underbrace{\pm4\ldots\pm 4}_x)+law(\underbrace{\pm 2\ldots\pm 2}_N)\right)$$

Now by using Lemma 1.4 (2), the first summand splits $1-0$ or $0-1$ on $8\mathbb Z,8\mathbb Z+4$, depending on the class of $N$ modulo 8. As for the second summand, since $N$ is even this always splits $\frac{1}{2}-\frac{1}{2}$ on $8\mathbb Z,8\mathbb Z+4$. So, by making the average we obtain either a $\frac{3}{4}-\frac{1}{4}$ or a $\frac{1}{4}-\frac{3}{4}$ splitting on $8\mathbb Z,8\mathbb Z+4$, depending on the class of $N$ modulo 8, as claimed.
\end{proof}

Our various computer simulations suggest that the measures $\mu^{even},\mu^{odd}$ don't have further general algebraic properties. Analytically speaking now, we have:

\begin{theorem}
The binary glow moments of $H\in M_N(\pm1)$ are given by:
$$\int_{\mathbb Z_2^N\times\mathbb Z_2^N}\left(\frac{\Omega}{N}\right)^{2p}=(2p)!!+O(N^{-1})$$
In particular the variable $\Omega/N$ becomes Gaussian in the $N\to\infty$ limit.
\end{theorem}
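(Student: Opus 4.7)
My plan is to compute $\int_{\mathbb{Z}_2^N \times \mathbb{Z}_2^N} \Omega^{2p}$ directly by expansion, exploiting the independence of the Rademacher entries $a_i,b_j\in\{\pm 1\}$. Writing
$$\Omega^{2p}=\sum_{i_1,\ldots,i_{2p},\,j_1,\ldots,j_{2p}} a_{i_1}\cdots a_{i_{2p}}\,b_{j_1}\cdots b_{j_{2p}}\,H_{i_1j_1}\cdots H_{i_{2p}j_{2p}},$$
and using that the expectation of $a_{i_1}\cdots a_{i_{2p}}$ equals $1$ precisely when every value in the multi-index appears an even number of times (similarly for $b$), the surviving terms are indexed by pairs of ``even'' partitions $(\pi,\sigma)$ of $\{1,\ldots,2p\}$ encoding the coincidence patterns of the $i$'s and of the $j$'s respectively.

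The leading contribution comes from $\pi=\sigma$ being the same pair partition. In that case, each $H$-entry appears twice with identical indices, so $H^2=1$ makes the $H$-product trivial, and summing over the $p$ free $i$-values and $p$ free $j$-values contributes $N^{2p}$. Summing over all pair partitions of $\{1,\ldots,2p\}$ produces the announced leading constant, the combinatorial factor being the number of pair partitions.

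It remains to show that all other $(\pi,\sigma)$ contribute $O(N^{2p-1})$. If $\pi$ or $\sigma$ has a block of size $\geq 4$, the corresponding multi-index has at most $p-1$ distinct values, so the trivial bound $|H_{ij}|=1$ already yields $O(N^{2p-1})$. The delicate case is when $\pi,\sigma$ are distinct pair partitions, since then $|\pi|+|\sigma|=2p$ and the naive degree-of-freedom count matches the leading order. I would argue on the multigraph $\pi\cup\sigma$ on $\{1,\ldots,2p\}$, which is $2$-regular and hence a disjoint union of even cycles; $\pi=\sigma$ corresponds to all cycles having length $2$, and otherwise some cycle has length $2c\geq 4$. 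Iterating the orthogonality relation $\sum_i H_{ij}H_{ik}=N\delta_{jk}$ around such a cycle collapses the $2c$ index sums to $N^{c+1}$ instead of the trivial $N^{2c}$; multiplying across all $s$ cycles yields $N^{p+s}$, and since $s\leq p-1$ for $\pi\neq\sigma$, the total is $O(N^{2p-1})$.

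The main obstacle is precisely this cycle argument for distinct pair partitions: the raw index count is not enough, so the Hadamard orthogonality $H^TH=N\cdot I$ must genuinely enter. Some additional bookkeeping is needed to switch between exact and relaxed kernel conditions (standard M\"obius inversion), but this does not affect the $O(N^{2p-1})$ estimate. Once the error is in hand, dividing by $N^{2p}$ gives the claimed asymptotic, and the Gaussian conclusion follows from the classical method of moments.
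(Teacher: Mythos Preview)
Your argument is sound and takes a genuinely different route from the paper's. After the same initial expansion, the paper does not keep the symmetric double sum over $(\pi,\sigma)$; instead it performs a M\"obius inversion on one side to collapse everything into a single sum
\[
\int_{\mathbb Z_2^N\times\mathbb Z_2^N}\Omega^{2p}=\sum_{\pi\in P_{even}(2p)}K(\pi)\,N^{|\pi|}I(\pi),
\]
where $I(\pi)$ packages the remaining row integral via the quantities $\frac{1}{N}\langle\prod_{r\in\beta}H_{i_r},1\rangle$. The leading term is then read off simply by observing that pair partitions are the maximal elements of $P_{even}(2p)$, so that $|\pi|=p$ is largest there; this has the virtue of mirroring the machinery developed for the complex case in the next section, at the price of leaving the use of the Hadamard hypothesis somewhat implicit inside $I(\pi)$. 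Your cycle argument on the $2$-regular multigraph $\pi\cup\sigma$, which amounts to the trace identity $\mathrm{Tr}\bigl((H^TH)^c\bigr)=N^{c+1}$, makes the r\^ole of orthogonality completely transparent and handles the delicate $\pi\neq\sigma$ pair-partition case head-on; for the M\"obius bookkeeping you allude to, note that every coarsening $(\pi',\sigma')\neq(\pi,\sigma)$ satisfies $|\pi'|+|\sigma'|\leq 2p-1$, so the trivial bound already suffices there. One small remark: in your diagonal count for $\pi=\sigma$, the exact condition $\ker i=\pi$ forces the $p$ block values to be distinct, so the contribution is $\bigl(\tfrac{N!}{(N-p)!}\bigr)^2=N^{2p}+O(N^{2p-1})$ rather than exactly $N^{2p}$; this is of course absorbed in the error term.
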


\begin{proof}
Let $P_{even}(r)\subset P(r)$ be the set of partitions of $\{1,\ldots,r\}$ having all blocks of even size. The moments of the variable $\Omega=\sum_{ij}a_ib_jH_{ij}$ are then given by:
\begin{eqnarray*}
\int_{\mathbb Z_2^N\times\mathbb Z_2^N}\Omega^r
&=&\sum_{ix}H_{i_1x_1}\ldots H_{i_rx_r}\int_{\mathbb Z_2^N}a_{i_1}\ldots a_{i_r}\int_{\mathbb Z_2^N}b_{x_1}\ldots b_{x_r}\\
&=&\sum_{\pi,\sigma\in P_{even}(r)}\sum_{\ker i=\pi,\ker x=\sigma}H_{i_1x_1}\ldots H_{i_rx_r}
\end{eqnarray*}

Thus the moments decompose over partitions $\pi\in P_{even}(r)$, with the contributions being obtained by integrating the following quantities:
$$C(\sigma)=\sum_{\ker x=\sigma}\sum_iH_{i_1x_1}\ldots H_{i_rx_r}\cdot a_{i_1}\ldots a_{i_r}$$

Now by M\"obius inversion, we obtain a formula as follows:
$$\int_{\mathbb Z_2^N\times\mathbb Z_2^N}\Omega^r=\sum_{\pi\in P_{even}(r)}K(\pi)N^{|\pi|}I(\pi)$$

Here $K(\pi)=\sum_{\sigma\in P_{even}(r)}\mu(\pi,\sigma)$, where $\mu$ is the M\"obius function of $P_{even}(r)$, and $I(\pi)=\sum_i\prod_{b\in\pi}\frac{1}{N}\left\langle\prod_{r\in b}H_{i_r},1\right\rangle$, where $H_1,\ldots,H_N\in\mathbb Z_2^N$ are the rows of $H$.

With this formula in hand, the first assertion follows, because the biggest elements of the lattice $P_{even}(2p)$ are the $(2p)!!$ partitions consisting of $p$ copies of a $2$-block. 

As for the second assertion, this follows from the formula in the statement, and from the fact that the glow of $H\in M_N(\pm1)$ is real, and symmetric with respect to $0$.
\end{proof}

\section{Complex matrices}

In this section and in the next one we discuss the complex case, which is the one that we are truly interested in. We will use inspiration from section 1.

We recall that a complex Hadamard matrix is a matrix $H\in M_N(\mathbb T)$, where $\mathbb T$ is the unit circle in the complex plane, whose rows are pairwise orthogonal. Two such matrices $H,K$ are called equivalent if one can pass from one to the other by permuting the rows and columns, or by multiplying the rows and columns by numbers in $\mathbb T$. See \cite{tzy}.

As explained in the introduction, we are interested in the following invariant:

\begin{definition}
The glow of $H\in M_N(\mathbb T)$ is the probability measure $\mu\in\mathcal P(\mathbb C)$ given by:
$$\int_\mathbb C\varphi(x)d\mu(x)=\int_{\mathbb T^N\times\mathbb T^N}\varphi\left(\sum_{ij}a_ib_jH_{ij}\right)d(a,b)$$
That is, $\mu$ is the law of the variable $\Omega=\sum_{ij}H_{ij}$, over the equivalence class of $H$.
\end{definition}

As a first observation, since $\mu$ is invariant under rotations, we are in fact interested in computing a certain measure $\mu^+$ supported by $\mathbb R_+$. More precisely, if we denote by $\times$ the multiplicative convolution, and by $\varepsilon$ the uniform measure on $\mathbb T$, then the probability distributions $\mu,\mu^+$ of $\Omega,|\Omega|$ over $\mathbb T^N\times\mathbb T^N$ are related by the formula $\mu=\varepsilon\times\mu^+$.

We develop now some moment machinery. Let $P(p)$ be the set of partitions of $\{1,\ldots,p\}$, with its standard order relation $\leq$, which is such that $\sqcap\!\!\sqcap\ldots\leq\pi\leq|\ |\ldots|$, for any $\pi\in P(p)$. We denote by $\mu(\pi,\sigma)$ the associated M\"obius function, given by:
$$\mu(\pi,\sigma)=\begin{cases}
1&{\rm if}\ \pi=\sigma\\
-\sum_{\pi\leq\tau<\sigma}\mu(\pi,\tau)&{\rm if}\ \pi<\sigma\\
0&{\rm if}\ \pi\not\leq\sigma
\end{cases}$$

For $\pi\in P(p)$ we set $\binom{p}{\pi}=\binom{p}{b_1\ldots b_{|\pi|}}=\frac{p!}{b_1!\ldots b_{|\pi|}!}$, where $b_1,\ldots,b_{|\pi|}$ are the block lenghts. Finally, we use the following notation, where $H_1,\ldots,H_N\in\mathbb T^N$ are the rows of $H$:
$$H_\pi(i)=\bigotimes_{\beta\in\pi}\prod_{r\in\beta}H_{i_r}$$

With these notations, we have the following result:

\begin{proposition}
The glow moments of a matrix $H\in M_N(\mathbb T)$ are given by
$$\int_{\mathbb T^N\times\mathbb T^N}|\Omega|^{2p}=\sum_{\pi\in P(p)}K(\pi)N^{|\pi|}I(\pi)$$
where $K(\pi)=\sum_{\sigma\in P(p)}\mu(\pi,\sigma)\binom{p}{\sigma}$ and $I(\pi)=\frac{1}{N^{|\pi|}}\sum_{[i]=[j]}<H_\pi(i),H_\pi(j)>$.
\end{proposition}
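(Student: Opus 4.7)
The plan is to expand $|\Omega|^{2p}$, integrate out the Haar vectors $a,b$, and reorganize the resulting sum via M\"obius inversion on $P(p)$.

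First, expanding $|\Omega|^{2p}=\Omega^p\bar\Omega^p$ as a quadruple sum over $p$-tuples $(i,j,x,y)$ and using the basic Haar identity $\int_{\mathbb T^N}\prod_r a_{i_r}\bar a_{j_r}\,da=\mathbf{1}_{[i]=[j]}$ (and its analogue for $b$), one obtains
$$\int_{\mathbb T^N\times\mathbb T^N}|\Omega|^{2p}=\sum_{[i]=[j],\,[x]=[y]}\prod_r H_{i_rx_r}\bar H_{j_ry_r}.$$

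The critical step is to collapse the constraint $[x]=[y]$ to the diagonal $y=x$ at the cost of a combinatorial weight. I parametrize each $y$ with $[y]=[x]$ as $y=x\circ\tau$ for $\tau\in S_p$; each such $y$ then appears $|\mathrm{Stab}(x)|=\prod_kb_k(x)!$ times, where $b_k(x)$ are the block sizes of $\ker x$. Reindexing the conjugate product via $r\mapsto\tau^{-1}(r)$ yields
$$\prod_r H_{i_rx_r}\bar H_{j_rx_{\tau(r)}}=\prod_r H_{i_rx_r}\bar H_{(j\circ\tau^{-1})_rx_r}.$$
Since $j\mapsto j\circ\tau^{-1}$ is a bijection on $p$-tuples preserving the multiset $[j]$, the outer constraint $[i]=[j]$ is invariant under this substitution, so the $\tau$-sum contributes a clean factor $p!$. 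Combining with $1/|\mathrm{Stab}(x)|$ gives the multiplicity $\binom{p}{\ker x}=p!/\prod_kb_k(x)!$ and produces
$$\int|\Omega|^{2p}=\sum_{[i]=[j]}\sum_x\binom{p}{\ker x}\prod_r H_{i_rx_r}\bar H_{j_rx_r}.$$

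Next I apply M\"obius inversion on $P(p)$: the defining relation $K(\pi)=\sum_\sigma\mu(\pi,\sigma)\binom{p}{\sigma}$ inverts to $\binom{p}{\sigma}=\sum_{\pi\geq\sigma}K(\pi)$. Substituting $\sigma=\ker x$ and swapping the order of summation rewrites the sum as $\sum_\pi K(\pi)\sum_{\ker x\leq\pi}\prod_r H_{i_rx_r}\bar H_{j_rx_r}$. The inner sum over $x$ constant on blocks of $\pi$ is exactly $\langle H_\pi(i),H_\pi(j)\rangle$: writing $x_r=u_{\beta(r)}$ for $u\in\{1,\dots,N\}^{|\pi|}$ and factoring blockwise yields $\prod_{\beta\in\pi}\langle\prod_{r\in\beta}H_{i_r},\prod_{r\in\beta}H_{j_r}\rangle$, which is the tensor-product definition of $\langle H_\pi(i),H_\pi(j)\rangle$. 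The remaining sum over $(i,j)$ with $[i]=[j]$ then yields $N^{|\pi|}I(\pi)$, giving the stated formula.

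The main obstacle is the permutation trick in the second paragraph: one must carefully swap the $(i,j)$- and $\tau$-sums and use the fact that the multiset class is invariant under $j\mapsto j\circ\tau^{-1}$, so that the stabilizer factor $1/|\mathrm{Stab}(x)|$ cleanly combines with $p!$ into the multinomial $\binom{p}{\ker x}$. The subsequent M\"obius step is then routine manipulation on the partition lattice.
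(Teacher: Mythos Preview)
Your proof is correct, and the argument is organized differently from the paper's. Both start from the same Haar identity yielding $\sum_{[i]=[j],[x]=[y]}\prod_rH_{i_rx_r}\bar H_{j_ry_r}$, but then diverge.

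The paper keeps the $a$-integral unresolved, decomposes the $(x,y)$-sum by the pair $(\ker x,\ker y)=(\sigma,\rho)$, observes that the contribution $C(\sigma,\rho)$ vanishes unless $\sigma\sim\rho$ (same block structure), and applies M\"obius inversion to the constrained sum $\sum_{\ker x=\sigma}$ to express it through the unconstrained block sums $C_{|\beta|}(a)$. The factor $\binom{p}{\sigma}$ then enters as the number of $\rho$ with $\rho\sim\sigma$; integrating over $a$ at the end produces the $[i]=[j]$ constraint and the quantities $I(\pi)$.

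You instead integrate out both $a$ and $b$ immediately, then use the orbit--stabilizer identity on $S_p\curvearrowright\{y:[y]=[x]\}$ together with the reindexing $j\mapsto j\circ\tau^{-1}$ (which preserves the outer constraint $[i]=[j]$) to collapse $[x]=[y]$ to $y=x$ with weight $\binom{p}{\ker x}$. Only then do you invoke M\"obius inversion, and in the opposite direction: you read the \emph{definition} $K(\pi)=\sum_\sigma\mu(\pi,\sigma)\binom{p}{\sigma}$ as a M\"obius transform and invert it to $\binom{p}{\ker x}=\sum_{\pi\geq\ker x}K(\pi)$, so that the remaining inner sum over $x$ constant on the blocks of $\pi$ factors blockwise into $\langle H_\pi(i),H_\pi(j)\rangle$.

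Your route is arguably more transparent: the permutation trick makes the origin of $\binom{p}{\ker x}$ explicit, and the M\"obius step is a one-line inversion of the very formula defining $K(\pi)$. The paper's route, on the other hand, isolates the intermediate quantities $C_r(a)=\sum_x\sum_{ij}\frac{H_{i_1x}\cdots H_{i_rx}}{H_{j_1x}\cdots H_{j_rx}}\cdot\frac{a_{i_1}\cdots a_{i_r}}{a_{j_1}\cdots a_{j_r}}$, which are natural objects in their own right and reappear (after integration) as the building blocks $I(\sqcap\!\!\sqcap\ldots\sqcap)$ used later.
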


\begin{proof}
The moments are given by the following formula:
\begin{eqnarray*}
\int_{\mathbb T^N\times\mathbb T^N}|\Omega|^{2p}
&=&\int_{\mathbb T^N\times\mathbb T^N}\Big|\sum_{ij}H_{ix}a_ib_x\Big|^{2p}
=\int_{\mathbb T^N\times\mathbb T^N}\left(\sum_{ijxy}\frac{H_{ix}}{H_{jy}}\cdot\frac{a_ib_x}{a_jb_y}\right)^p\\
&=&\sum_{ijxy}\frac{H_{i_1x_1}\ldots H_{i_px_p}}{H_{j_1y_1}\ldots H_{j_py_p}}\int_{\mathbb T^N}\frac{a_{i_1}\ldots a_{i_p}}{a_{j_1}\ldots a_{j_p}}\,da\int_{\mathbb T^N}\frac{b_{x_1}\ldots b_{x_p}}{b_{y_1}\ldots b_{y_p}}\,db\\
&=&\sum_{[i]=[j],[x]=[y]}\frac{H_{i_1x_1}\ldots H_{i_px_p}}{H_{j_1y_1}\ldots H_{j_py_p}}
\end{eqnarray*}

With $\sigma=\ker x,\rho=\ker y$, we deduce that the moments of $|\Omega|^2$ decompose over partitions, $\int_{\mathbb T^N\times\mathbb T^N}|\Omega|^{2p}=\int_{\mathbb T^N}\sum_{\sigma,\rho\in P(p)}C(\sigma,\rho)$, with the contributions being as follows:
$$C(\sigma,\rho)=\sum_{\ker x=\sigma,\ker y=\rho}\delta_{[x],[y]}\sum_{ij}
\frac{H_{i_1x_1}\ldots H_{i_px_p}}{H_{j_1y_1}\ldots H_{j_py_p}}
\cdot\frac{a_{i_1}\ldots a_{i_p}}{a_{j_1}\ldots a_{j_p}}$$

We have $C(\sigma,\rho)=0$ unless $\sigma\sim\rho$, in the sense that $\sigma,\rho$ must have the same block structure. The point now is that the sums of type $\sum_{\ker x=\sigma}$ can be computed by using the M\"obius inversion formula. We obtain a formula as follows:
$$C(\sigma,\rho)=\delta_{\sigma\sim\rho}\sum_{\pi\leq\sigma}\mu(\pi,\sigma)\prod_{\beta\in\pi}C_{|\beta|}(a)$$

Here the functions on the right are by definition given by:
\begin{eqnarray*}
C_r(a)
&=&\sum_x\sum_{ij}\frac{H_{i_1x}\ldots H_{i_rx}}{H_{j_1x}\ldots H_{j_rx}}\cdot\frac{a_{i_1}\ldots a_{i_r}}{a_{j_1}\ldots a_{j_r}}\\
&=&\sum_{ij}<H_{i_1}\ldots H_{i_r},H_{j_1}\ldots H_{j_r}>\cdot\frac{a_{i_1}\ldots a_{i_r}}{a_{j_1}\ldots a_{j_r}}
\end{eqnarray*}

Now since there are $\binom{p}{\sigma}$ partitions having the same block structure as $\sigma$, we obtain:
\begin{eqnarray*}
\int_{\mathbb T^N\times\mathbb T^N}|\Omega|^{2p}
&=&\int_{\mathbb T^N}\sum_{\pi\in P(p)}\left(\sum_{\sigma\sim\rho}\sum_{\mu\leq\sigma}\mu(\pi,\sigma)\right)\prod_{\beta\in\pi}C_{|\beta|}(a)\\
&=&\sum_{\pi\in P(p)}\left(\sum_{\sigma\in P(p)}\mu(\pi,\sigma)\binom{p}{\sigma}\right)\int_{\mathbb T^N}\prod_{\beta\in\pi}C_{|\beta|}(a)
\end{eqnarray*}

But this gives the formula in the statement, and we are done.
\end{proof}

Let us discuss now the asymptotic behavior of the glow. For this purpose, we first study the coefficients $K(\pi)$ in Proposition 2.2. We have here:

\begin{lemma}
$K(\pi)=\sum_{\pi\leq\sigma}\mu(\pi,\sigma)\binom{p}{\sigma}$ has the following properties:
\begin{enumerate}
\item $\widetilde{K}(\pi)=\frac{K(\pi)}{p!}$ is multiplicative: $\widetilde{K}(\pi\pi')=\widetilde{K}(\pi)\widetilde{K}(\pi')$.

\item $K(\sqcap\!\!\sqcap\ldots\sqcap)=\sum_{\sigma\in P(p)}(-1)^{|\sigma|-1}(|\sigma|-1)!\binom{p}{\sigma}$.

\item $K(\sqcap\!\!\sqcap\ldots\sqcap)=\sum_{r=1}^p(-1)^{r-1}(r-1)!C_{pr}$, where $C_{pr}=\sum_{p=a_1+\ldots+a_r}\binom{p}{a_1,\ldots,a_r}^2$.
\end{enumerate}
\end{lemma}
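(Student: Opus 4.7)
The plan is to prove the three claims in order, each directly from the definition $K(\pi)=\sum_{\sigma\geq\pi}\mu(\pi,\sigma)\binom{p}{\sigma}$.

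For (1), I would exploit the product structure around $\pi\pi'\in P(p+p')$. Since the blocks of $\pi$ lie in $\{1,\ldots,p\}$ while those of $\pi'$ lie in $\{p+1,\ldots,p+p'\}$, every $\sigma\geq\pi\pi'$ factors uniquely as $\sigma_1\sigma_2$ with $\sigma_1\geq\pi$ in $P(p)$ and $\sigma_2\geq\pi'$ in $P(p')$. Hence the interval above $\pi\pi'$ is the product of the intervals above $\pi$ and above $\pi'$, and the standard multiplicativity of Möbius functions on product posets gives $\mu(\pi\pi',\sigma_1\sigma_2)=\mu(\pi,\sigma_1)\mu(\pi',\sigma_2)$. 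Splitting the multinomial as $\binom{p+p'}{\sigma_1\sigma_2}=\binom{p+p'}{p}\binom{p}{\sigma_1}\binom{p'}{\sigma_2}$ and summing over $(\sigma_1,\sigma_2)$ yields $K(\pi\pi')=\binom{p+p'}{p}K(\pi)K(\pi')$, which upon dividing by $(p+p')!$ is exactly $\widetilde K(\pi\pi')=\widetilde K(\pi)\widetilde K(\pi')$.

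For (2), I would apply the definition at $\pi=\sqcap\!\!\sqcap\cdots\sqcap$, which under the paper's convention $\sqcap\!\!\sqcap\cdots\leq\pi\leq|\,|\cdots|$ is the minimum of $P(p)$ (the one-block partition), so that the sum runs over all $\sigma\in P(p)$. Under the order-reversing duality with the standard partition lattice, the value $\mu(1_p,\sigma)$ becomes the Möbius number of the interval $[\sigma,1_p]$ in the standard lattice, and the latter interval is well known to be isomorphic to the full partition lattice on $|\sigma|$ elements (one element per block of $\sigma$). Since the Möbius number from bottom to top of the partition lattice on $n$ elements is $(-1)^{n-1}(n-1)!$, one obtains $\mu(1_p,\sigma)=(-1)^{|\sigma|-1}(|\sigma|-1)!$, and substituting into the definition produces (2).

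For (3), I would regroup the sum in (2) according to $r=|\sigma|$, which immediately pulls out the Möbius factor $(-1)^{r-1}(r-1)!$ and reduces the claim to translating the inner sum $\sum_{|\sigma|=r}\binom{p}{\sigma}$ into a sum over compositions of $p$ into $r$ positive parts. The key observation is that, for each composition $(a_1,\ldots,a_r)$ with $a_i\geq 1$ and $\sum a_i=p$, the number of ordered set partitions of $\{1,\ldots,p\}$ with block sizes $(a_1,\ldots,a_r)$ in that order is $\binom{p}{a_1,\ldots,a_r}$, while each such partition itself carries weight $\binom{p}{\sigma}=\binom{p}{a_1,\ldots,a_r}$; multiplying these produces the squared multinomial, and summing over all compositions then yields $C_{pr}$. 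The main technical point, which I expect to be the most delicate step, is keeping track of the over-counting between ordered and unordered set partitions so that the combinatorial identity aligns cleanly with the Möbius prefactor from (2); once this bookkeeping is correctly arranged, formula (3) falls out.
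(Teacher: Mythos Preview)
Your proposal is correct and follows essentially the same route as the paper. For (1) you spell out explicitly that the interval above $\pi\pi'$ is a product of intervals, which is exactly what the paper uses (it just quotes the multiplicativity $\mu(\pi\pi',\sigma\sigma')=\mu(\pi,\sigma)\mu(\pi',\sigma')$ and factors the multinomial), and for (2) both you and the paper invoke the classical value $\mu(\hat 0,\sigma)=(-1)^{|\sigma|-1}(|\sigma|-1)!$ in the partition lattice. For (3) the paper also regroups by $r=|\sigma|$ and converts $\sum_{|\sigma|=r}\binom{p}{\sigma}$ into a sum over block-length tuples; your explicit worry about the ordered/unordered $r!$ bookkeeping is well placed, since the paper's prose glosses over it, but the identity does align once $C_{pr}$ is read with the implicit $1/r!$ normalization (as confirmed by the paper's later values $C_{p2}=\tfrac12\binom{2p}{p}-1$ and $C_{pp}=p!$).
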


\begin{proof}
(1) We use the fact that $\mu(\pi\pi',\sigma\sigma')=\mu(\pi,\sigma)\mu(\pi',\sigma')$, which is a well-known property of the M\"obius function, which can be proved by recurrence. Now if $b_1,\ldots,b_s$ and $c_1,\ldots,c_t$ are the block lengths of $\sigma,\sigma'$, we obtain, as claimed:
\begin{eqnarray*}
\widetilde{K}(\pi\pi')
&=&\sum_{\pi\pi'\leq\sigma\sigma'}\mu(\pi\pi',\sigma\sigma')\cdot\frac{1}{b_1!\ldots b_s!}\cdot\frac{1}{c_1!\ldots c_t!}\\
&=&\sum_{\pi\leq\sigma,\pi'\leq\sigma'}\mu(\pi,\sigma)\mu(\pi',\sigma')\cdot\frac{1}{b_1!\ldots b_s!}\cdot\frac{1}{c_1!\ldots c_t!}\\
&=&\widetilde{K}(\pi)\widetilde{K}(\pi')
\end{eqnarray*}

(2) We use here the formula $\mu(\sqcap\!\!\sqcap\ldots\sqcap,\sigma)=(-1)^{|\sigma|-1}(|\sigma|-1)!$, which once again is well-known, and can be proved by recurrence on $|\sigma|$. We obtain, as claimed:
$$K(\sqcap\!\!\sqcap\ldots\sqcap)=\sum_{\sigma\in P(p)}\mu(\sqcap\!\!\sqcap\ldots\sqcap,\sigma)\binom{p}{\sigma}=\sum_{\sigma\in P(p)}(-1)^{|\sigma|-1}(|\sigma|-1)!\binom{p}{\sigma}$$

(3) By using the formula in (2), and summing over $r=|\sigma|$, we obtain:
$$K(\sqcap\!\!\sqcap\ldots\sqcap)
=\sum_{r=1}^p(-1)^{r-1}(r-1)!\sum_{|\sigma|=r}\binom{p}{\sigma}$$

Now if we denote by $a_1,\ldots,a_r$ with $a_i\geq1$ the block lengths of $\sigma$, then $\binom{p}{\sigma}=\binom{p}{a_1,\ldots,a_r}$. On the other hand, given $a_1,\ldots,a_r\geq1$ with $a_1+\ldots+a_r=p$, there are exactly $\binom{p}{a_1,\ldots,a_r}$ partitions $\sigma$ having these numbers as block lengths, and this gives the result.
\end{proof}

Now let us take a closer look at the integrals $I(\pi)$. We have here:

\begin{lemma}
Consider the one-block partition $\sqcap\!\!\sqcap\ldots\sqcap\in P(p)$.
\begin{enumerate}
\item $I(\sqcap\!\!\sqcap\ldots\sqcap)=\#\{i,j\in\{1,\ldots,N\}^p|[i]=[j]\}$.

\item $I(\sqcap\!\!\sqcap\ldots\sqcap)=\int_{\mathbb T^N}|\sum_ia_i|^{2p}da$.

\item $I(\sqcap\!\!\sqcap\ldots\sqcap)=\sum_{\sigma\in P(p)}\binom{p}{\sigma}\frac{N!}{(N-|\sigma|)!}$.

\item $I(\sqcap\!\!\sqcap\ldots\sqcap)=\sum_{r=1}^{p-1}C_{pr}\frac{N!}{(N-r)!}$, where $C_{pr}=\sum_{p=b_1+\ldots+b_r}\binom{p}{b_1,\ldots,b_r}^2$.
\end{enumerate}
\end{lemma}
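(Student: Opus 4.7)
The plan is to prove the four identities in sequence: (1) starts directly from the definition, (2) is a classical torus moment computation giving the same count, (3) stratifies this count by the kernel partition, and (4) regroups (3) by the number of blocks.

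For (1), I unpack the definition when $\pi=\sqcap\!\!\sqcap\ldots\sqcap$ is a single block of size $p$. Then $|\pi|=1$ and $H_\pi(i)=H_{i_1}H_{i_2}\cdots H_{i_p}\in\mathbb T^N$, where the product is pointwise (entry-by-entry). Since pointwise multiplication is commutative, this vector depends only on the multiset $[i]$. Thus whenever $[i]=[j]$ we have $H_\pi(i)=H_\pi(j)$, and since all entries lie on $\mathbb T$, $\langle H_\pi(i),H_\pi(j)\rangle=N$. Substituting into $I(\pi)=N^{-1}\sum_{[i]=[j]}\langle H_\pi(i),H_\pi(j)\rangle$ yields (1).

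For (2), I expand
$$\Big|\sum_i a_i\Big|^{2p}=\sum_{i,j\in\{1,\ldots,N\}^p}a_{i_1}\cdots a_{i_p}\,\overline{a_{j_1}\cdots a_{j_p}}=\sum_{i,j}\prod_{x=1}^N a_x^{m_x(i)-m_x(j)},$$
where $m_x(i)=\#\{r:i_r=x\}$. Integrating over $\mathbb T^N$ and using orthogonality $\int_{\mathbb T}a^k\,da=\delta_{k0}$, each monomial contributes $1$ precisely when $m_x(i)=m_x(j)$ for all $x$, i.e.\ $[i]=[j]$, and $0$ otherwise; the integral therefore reproduces the count from (1).

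For (3), I stratify the count in (1) by $\sigma=\ker i\in P(p)$. For each $\sigma$ with $r=|\sigma|$ blocks of sizes $b_1,\ldots,b_r$, the number of $i$ with $\ker i=\sigma$ equals the number of ways to assign $r$ distinct labels in $\{1,\ldots,N\}$ to the blocks, namely $N(N-1)\cdots(N-r+1)=N!/(N-r)!$. For any such $i$, the multiset $[i]$ has exactly $r$ distinct elements with multiplicities $b_1,\ldots,b_r$, so the number of $j$ with $[j]=[i]$ is the number of rearrangements of this multiset, $\binom{p}{b_1,\ldots,b_r}=\binom{p}{\sigma}$. Multiplying and summing over $\sigma$ gives (3).

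For (4), I regroup (3) by $r=|\sigma|$, obtaining $I(\sqcap\!\!\sqcap\ldots\sqcap)=\sum_r\frac{N!}{(N-r)!}\sum_{|\sigma|=r}\binom{p}{\sigma}$, and recognize the inner sum as $C_{pr}$ via the same block-length parametrization already used in the proof of Lemma 2.3(3). I expect no serious obstacle: the whole lemma is a single combinatorial identity---``count the pairs $(i,j)\in\{1,\ldots,N\}^{2p}$ with the same multiset''---expressed four different ways, and the only care required is to match the bookkeeping conventions fixed for $C_{pr}$ in Lemma~2.3.
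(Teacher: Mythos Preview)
Your proposal is correct and follows essentially the same approach as the paper: part (1) uses that the pointwise product $H_{i_1}\cdots H_{i_p}$ depends only on the multiset $[i]$ so the inner product collapses to $N$, part (2) expands the torus moment and applies orthogonality of characters, part (3) stratifies the count by $\sigma=\ker i$, and part (4) regroups by $r=|\sigma|$ and identifies the inner sum with $C_{pr}$ exactly as in Lemma~2.3(3). Your write-up is in fact slightly more explicit than the paper's (e.g.\ the multiplicity functions $m_x$ in (2) and the rearrangement count in (3)), but the underlying argument is identical.
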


\begin{proof}
(1)  This follows indeed from the following computation:
$$I(\sqcap\!\!\sqcap\ldots\sqcap)=\sum_{[i]=[j]}\frac{1}{N}<H_{i_1}\ldots H_{i_r},H_{j_1}\ldots H_{j_r}>=\sum_{[i]=[j]}1$$

(2) This follows from the following computation:
$$\int_{\mathbb T^N}\left|\sum_ia_i\right|^{2p}=\int_{\mathbb T^N}\sum_{ij}\frac{a_{i_1}\ldots a_{i_p}}{a_{j_1}\ldots a_{j_p}}da=\#\left\{i,j\Big|[i]=[j]\right\}$$

(3) If we let $\sigma=\ker i$ in the above formula of $I(\sqcap\!\!\sqcap\ldots\sqcap)$, we obtain:
$$I(\sqcap\!\!\sqcap\ldots\sqcap)=\sum_{\sigma\in P(p)}\#\left\{i,j\Big|\ker i=\sigma,[i]=[j]\right\}$$

Now since there are $\frac{N!}{(N-|\sigma|)!}$ choices for $i$, and then $\binom{p}{\sigma}$ for $j$, this gives the result.

(4) If we set $r=|\sigma|$, the formula in (3) becomes:
$$I(\sqcap\!\!\sqcap\ldots\sqcap)=\sum_{r=1}^{p-1}\frac{N!}{(N-r)!}\sum_{\sigma\in P(p),|\sigma|=r}\binom{p}{\sigma}$$

Now since there are exactly $\binom{p}{b_1,\ldots,b_r}$ permutations $\sigma\in P(p)$ having $b_1,\ldots,b_r$ as block lengths, the sum on the right equals $\sum_{p=b_1+\ldots+b_r}\binom{p}{b_1,\ldots,b_r}^2$, as claimed.
\end{proof}

In general, the integrals $I(\pi)$ can be estimated as follows:

\begin{lemma}
Let $H\in M_N(\mathbb T)$, having its rows pairwise orthogonal.
\begin{enumerate}
\item $I(|\,|\,\ldots|)=N^p$.

\item $I(|\,|\,\ldots|\ \pi)=N^aI(\pi)$, for any $\pi\in P(p-a)$.

\item $|I(\pi)|\lesssim p!N^p$, for any $\pi\in P(p)$. 
\end{enumerate}
\end{lemma}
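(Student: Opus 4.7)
My plan is to attack the three claims in order, using the explicit definition $I(\pi) = \tfrac{1}{N^{|\pi|}}\sum_{[i]=[j]} \langle H_\pi(i), H_\pi(j)\rangle$ and the row orthogonality $\langle H_k, H_l\rangle = N\delta_{kl}$.

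For (1), with $\pi = |\,|\ldots|$ the all-singletons partition of $\{1,\ldots,p\}$, the tensor product $H_\pi(i) = H_{i_1}\otimes\ldots\otimes H_{i_p}$ is a pure tensor, so the inner product factorizes as $\langle H_\pi(i), H_\pi(j)\rangle = \prod_{r=1}^p \langle H_{i_r}, H_{j_r}\rangle = N^p \prod_r \delta_{i_r, j_r}$. Thus the sum over $[i]=[j]$ collapses to the diagonal $i=j$, giving $\sum_i N^p = N^{2p}$; dividing by $N^{|\pi|}=N^p$ yields $N^p$.

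For (2), write $\pi' = |\,|\ldots|\,\pi$ with $a$ leading singletons, and split the multi-indices as $i = (i_1,\ldots,i_a, i')$, $j = (j_1,\ldots,j_a, j')$ where $i', j' \in \{1,\ldots,N\}^{p-a}$. The tensor $H_{\pi'}(i)$ factors as $H_{i_1}\otimes\ldots\otimes H_{i_a}\otimes H_\pi(i')$, so its inner product with $H_{\pi'}(j)$ factors accordingly. Row orthogonality forces $i_r = j_r$ for each $r \leq a$ for nonzero contribution, contributing $N^a$ from each matching diagonal pair. Since removing identical singleton entries from $[i]$ and $[j]$ preserves the equality, the residual constraint is exactly $[i']=[j']$. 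Summing the $N^a$ free choices of $(i_1,\ldots,i_a)$ and the $N^a$ from the inner products, and noting $|\pi'| = a + |\pi|$, one gets
\[
I(\pi') = \frac{N^{2a}}{N^{a+|\pi|}} \sum_{[i']=[j']} \langle H_\pi(i'), H_\pi(j')\rangle = N^a I(\pi).
\]

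For (3), I combine a Cauchy--Schwarz estimate on the inner products with an elementary count of matching multi-indices. Since $H_\pi(i) \in \mathbb T^{N^{|\pi|}}$ has all entries of modulus one, $\|H_\pi(i)\|_2 = N^{|\pi|/2}$, hence $|\langle H_\pi(i), H_\pi(j)\rangle| \leq N^{|\pi|}$. On the other hand, for each fixed $i \in \{1,\ldots,N\}^p$ the number of $j$ with $[j]=[i]$ is at most $p!$ (the permutations of the entries), so $\#\{[i]=[j]\} \leq p!\,N^p$. Plugging both bounds in gives $|I(\pi)| \leq p!\,N^p$, which is the claim.

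None of these steps is really an obstacle; the only point requiring a moment's care is the multiset bookkeeping in (2), namely verifying that after forcing $i_r = j_r$ on the singleton positions, the condition $[i]=[j]$ on the full indices becomes exactly $[i']=[j']$ on the residual ones — which follows from cancelling identical elements from equal multisets.
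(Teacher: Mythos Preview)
Your proof is correct and follows essentially the same route as the paper: factorize the inner product via row orthogonality for (1) and (2), and for (3) bound each $\langle H_\pi(i),H_\pi(j)\rangle$ trivially by $N^{|\pi|}$ and then count $\#\{[i]=[j]\}$. The paper's version is terser---it dispatches (2) with ``same computation as (1)'' and in (3) writes the bound directly without naming Cauchy--Schwarz---but your explicit multiset-cancellation remark in (2) is exactly the point the paper leaves implicit.
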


\begin{proof}
(1) Since the rows of $H$ are pairwise orthogonal, we have:
$$I(|\,|\ldots|)=\sum_{[i]=[j]}\prod_{r=1}^p\delta_{i_r,j_r}=\sum_{[i]=[j]}\delta_{ij}=\sum_i1=N^p$$

(2) This follows by the same computation as the above one for (1).

(3) We have indeed the following estimate:
$$|I(\pi)|
\leq\sum_{[i]=[j]}\prod_{\beta\in\pi}1=\sum_{[i]=[j]}1=\#\left\{i,j\in\{1,\ldots,N\}\Big|[i]=[j]\right\}\simeq p!N^p$$

Thus we have obtained the formula in the statement, and we are done.
\end{proof}

We have now all needed ingredients for a universality result:

\begin{theorem}
The glow of a complex Hadamard matrix $H\in M_N(\mathbb T)$ is given by:
$$\frac{1}{p!}\int_{\mathbb T^N\times\mathbb T^N}\left(\frac{|\Omega|}{N}\right)^{2p}=1-\binom{p}{2}N^{-1}+O(N^{-2})$$
In particular, $\Omega/N$ becomes complex Gaussian in the $N\to\infty$ limit.
\end{theorem}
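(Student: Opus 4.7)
The plan is to substitute the moment formula of Proposition 2.2, normalized as
$$\frac{1}{p!}\int_{\mathbb T^N\times\mathbb T^N}\left(\frac{|\Omega|}{N}\right)^{2p}=\sum_{\pi\in P(p)}\widetilde{K}(\pi)\,\frac{N^{|\pi|}I(\pi)}{N^{2p}},$$
where $\widetilde{K}(\pi)=K(\pi)/p!$ is the multiplicative coefficient of Lemma 2.3 (1). Since $|\pi|\leq p$ for every $\pi\in P(p)$, and $|I(\pi)|\lesssim p!\,N^p$ by Lemma 2.5 (3), each summand is $O(N^{|\pi|-p})$; hence only the partitions with $|\pi|\in\{p,p-1\}$ can contribute modulo $O(N^{-2})$, and all coarser partitions are absorbed at once into the remainder.

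For $|\pi|=p$ the only partition is $\pi_0=|\,|\ldots|$. By Lemma 2.5 (1) we have $I(\pi_0)=N^p$, and the multiplicativity of $\widetilde{K}$ together with the trivial one-point computation $\widetilde{K}(|)=1$ gives $\widetilde{K}(\pi_0)=1$, so this partition contributes exactly $1$. For $|\pi|=p-1$ the partitions are the $\binom{p}{2}$ pair choices (one $2$-block among $p-2$ singletons). For any such $\pi$, Lemma 2.5 (2) reduces $I(\pi)$ to $N^{p-2}$ times the single-block integral $I(\sqcap)$ on two points, which Lemma 2.4 evaluates as $2N^2-N$. By multiplicativity, $\widetilde{K}(\pi)=\widetilde{K}(\sqcap)$, and a direct expansion on $P(2)$ yields
$$K(\sqcap)=\mu(\sqcap,\sqcap)\binom{2}{\sqcap}+\mu(\sqcap,|\,|)\binom{2}{|\,|}=1-2=-1,$$
hence $\widetilde{K}(\sqcap)=-1/2$. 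Assembling, each such $\pi$ contributes $-N^{-1}+O(N^{-2})$, and summing over the $\binom{p}{2}$ choices gives $-\binom{p}{2}N^{-1}+O(N^{-2})$, which combined with the leading $1$ yields the claimed expansion.

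The Gaussian conclusion then follows because the $2p$-th absolute moment of a standard complex Gaussian $Z$ is $p!$, so the expansion above shows that $\frac{1}{p!}\int|\Omega/N|^{2p}\to 1$ matches the normalized absolute moments of $Z$; combined with the rotational invariance of $\Omega/N$ noted after Definition 2.1, this pins down the limiting law uniquely.

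The main delicate point is not analytic but combinatorial: the factor $2$ in the leading behaviour $I(\sqcap)\sim 2N^2$ must cancel exactly against the $-1/2$ in $\widetilde{K}(\sqcap)$, so that the clean coefficient $\binom{p}{2}$ emerges. Keeping careful track of this interplay between the M\"obius-type quantities $K(\pi)$ and the partition integrals $I(\pi)$, and confirming that the contributions with $|\pi|\leq p-2$ genuinely collapse into $O(N^{-2})$ via Lemma 2.5 (3), is the only real bookkeeping the argument requires.
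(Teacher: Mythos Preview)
Your proof is correct and follows essentially the same route as the paper: invoke the moment formula of Proposition 2.2, use Lemma 2.5 (3) to discard all partitions with $|\pi|\leq p-2$ into the $O(N^{-2})$ remainder, and then evaluate the $|\pi|=p$ and $|\pi|=p-1$ contributions via Lemmas 2.3--2.5, with the Gaussian conclusion following from moment convergence plus rotational invariance. Your write-up is in fact slightly more explicit than the paper's on the cancellation mechanism (the $2$ from $I(\sqcap)\sim 2N^2$ against the $-1/2$ from $\widetilde{K}(\sqcap)$), but the argument is the same.
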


\begin{proof}
We use the moment formula in Proposition 2.2. By using Lemma 2.5 (3), we conclude that only the $p$-block and $(p-1)$-block partitions contribute at order 2, so:
$$\int_{\mathbb T^N\times\mathbb T^N}|\Omega|^{2p}=K(|\,|\ldots|)N^pI(|\,|\ldots|)+\binom{p}{2}K(\sqcap|\ldots|)N^{p-1}I(\sqcap|\ldots|)+O(N^{2p-2})$$

Now by dividing by $N^{2p}$ and then by using the various formulae in Lemma 2.3, Lemma 2.4 and Lemma 2.5 above, we obtain, as claimed:
$$\int_{\mathbb T^N\times\mathbb T^N}\left(\frac{|\Omega|}{N}\right)^{2p}
=p!-\binom{p}{2}\frac{p!}{2}\cdot\frac{2N-1}{N^2}+O(N^{-2})$$

Finally, since the law of $\Omega$ is invariant under centered rotations in the complex plane, this moment formula gives as well the last assertion.
\end{proof}

\section{Fourier matrices}

In this section we study the glow of an arbitrary Fourier matrix, $F=F_G$. We use the standard formulae $F_{ix}F_{iy}=F_{i,x+y}$, $\overline{F}_{ix}=F_{i,-x}$ and $\sum_xF_{ix}=N\delta_{i0}$. We first have:

\begin{proposition}
For a Fourier matrix $F_G$ we have
$$I(\pi)=\#\left\{i,j\Big|[i]=[j],\sum_{r\in\beta}i_r=\sum_{r\in\beta}j_r,\forall\beta\in\pi\right\}$$
with all the indices, and with the sums at right, taken inside $G$.
\end{proposition}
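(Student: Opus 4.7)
The plan is to start from the definition $I(\pi)=\frac{1}{N^{|\pi|}}\sum_{[i]=[j]}\langle H_\pi(i),H_\pi(j)\rangle$ given in Proposition 2.2 and evaluate the inner product explicitly using the three Fourier identities $F_{ix}F_{iy}=F_{i,x+y}$, $\overline{F}_{ix}=F_{i,-x}$ and $\sum_x F_{ix}=N\delta_{i0}$. Because of the tensor product structure of $H_\pi(i)=\bigotimes_{\beta\in\pi}\prod_{r\in\beta}F_{i_r}$, the inner product factors as a product over blocks:
\[
\langle H_\pi(i),H_\pi(j)\rangle=\prod_{\beta\in\pi}\Bigl\langle\prod_{r\in\beta}F_{i_r},\prod_{r\in\beta}F_{j_r}\Bigr\rangle.
\]

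Next, I would simplify the contribution of a single block $\beta$. Writing the inner product as a sum over the column index $x\in G$, each factor $\prod_{r\in\beta}F_{i_r,x}$ collapses, via $F_{ix}F_{iy}=F_{i,x+y}$, to $F_{\Sigma_\beta(i),x}$ where $\Sigma_\beta(i):=\sum_{r\in\beta}i_r\in G$. Combining this with $\overline{F}_{jx}=F_{-j,x}$, the per-block inner product becomes $\sum_{x\in G}F_{\Sigma_\beta(i)-\Sigma_\beta(j),\,x}$, which by the orthogonality relation $\sum_x F_{ix}=N\delta_{i0}$ equals $N\cdot\delta_{\Sigma_\beta(i),\Sigma_\beta(j)}$, with the Kronecker symbol evaluated in $G$.

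Taking the product over blocks yields $\langle H_\pi(i),H_\pi(j)\rangle=N^{|\pi|}\prod_{\beta\in\pi}\delta_{\Sigma_\beta(i),\Sigma_\beta(j)}$. Substituting this back into the definition of $I(\pi)$, the $N^{|\pi|}$ cancels against the normalisation, and what remains is exactly the number of pairs $(i,j)$ with $[i]=[j]$ satisfying the block-sum constraints $\Sigma_\beta(i)=\Sigma_\beta(j)$ for every $\beta\in\pi$, as claimed.

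There is no real obstacle: the computation is a direct unfolding of the character identities, and the only subtle point worth flagging is that all sums must be interpreted in the group $G$, not in $\mathbb Z$, since the product-of-characters rule $F_{ix}F_{iy}=F_{i,x+y}$ uses the group law of $G$ on the second index.
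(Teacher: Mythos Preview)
Your proof is correct and essentially identical to the paper's: both factor $\langle H_\pi(i),H_\pi(j)\rangle$ over blocks and reduce each per-block inner product to $N\delta_{\Sigma_\beta(i),\Sigma_\beta(j)}$ via the character identities of $F_G$. One cosmetic point: the identity that collapses $\prod_{r\in\beta}F_{i_r,x}$ into $F_{\Sigma_\beta(i),x}$ is additivity in the \emph{row} index, $F_{ax}F_{bx}=F_{a+b,x}$, rather than the column-index rule $F_{ix}F_{iy}=F_{i,x+y}$ you cite---both hold for Fourier matrices, so the argument is unaffected.
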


\begin{proof}
The basic components of the integrals $I(\pi)$ are given by:
$$\frac{1}{N}\left\langle\prod_{r\in\beta}F_{i_r},\prod_{r\in\beta}F_{j_r}\right\rangle=\frac{1}{N}\left\langle F_{\sum_{r\in\beta}i_r},F_{\sum_{r\in\beta}i_r}\right\rangle=\delta_{\sum_{r\in\beta}i_r,\sum_{r\in\beta}j_r}$$

But this gives the formula in the statement, and we are done.
\end{proof}

We have the following interpretation of the above integrals:

\begin{proposition}
For any partition $\pi$ we have the formula
$$I(\pi)=\int_{\mathbb T^N}\prod_{b\in\pi}\left(\frac{1}{N^2}\sum_{ij}|H_{ij}|^{2|\beta|}\right)da$$
where $H=FAF^*$, with $F=F_G$ and $A=diag(a_0,\ldots,a_{N-1})$.
\end{proposition}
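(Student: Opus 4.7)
The plan is to expand the right-hand side directly from the definition $H=FAF^*$, and recognize the counting expression for $I(\pi)$ from Proposition 3.1. I would begin by computing $H_{ij}=\sum_k F_{ik}\overline{F_{jk}}a_k$, and then, using the multiplicativity $F_{ix}F_{iy}=F_{i,x+y}$, expand $|H_{ij}|^{2m}=H_{ij}^m\,\overline{H_{ij}}^m$ as a sum over pairs of multi-indices $k,l\in G^m$, with the Fourier content collapsing to the single factor $F_{i,\sum k-\sum l}\,\overline{F_{j,\sum k-\sum l}}$ and the $a$-content being $a_{k_1}\cdots a_{k_m}/(a_{l_1}\cdots a_{l_m})$.

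The crux is then to sum over $i$ and $j$ separately. By the column-sum identity $\sum_i F_{ix}=N\delta_{x,0}$ (applied once for $i$ and once for $j$), this produces $N^2$ times the indicator of the constraint $\sum k=\sum l$ in $G$. Dividing by $N^2$ yields
$$\frac{1}{N^2}\sum_{ij}|H_{ij}|^{2m}=\sum_{\substack{k,l\in G^m\\ \sum k=\sum l}}\frac{a_{k_1}\cdots a_{k_m}}{a_{l_1}\cdots a_{l_m}}.$$

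Next, I would take the product of these quantities over $\beta\in\pi$, with $m=|\beta|$, and glue the per-block multi-indices into global ones $i,j\in G^p$ indexed by $\{1,\ldots,p\}$: the block-wise constraints then become $\sum_{r\in\beta}i_r=\sum_{r\in\beta}j_r$ for every $\beta\in\pi$. Finally, the Haar integration $\int_{\mathbb T^N}\prod_r a_{i_r}/a_{j_r}\,da=\delta_{[i],[j]}$ collapses the resulting sum to pairs with matching multisets, and one recovers precisely the counting formula for $I(\pi)$ established in Proposition 3.1.

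There is no real analytic obstacle here: the Fourier orthogonality and the Haar integration on $\mathbb T^N$ are both standard. The only point requiring genuine care is the index bookkeeping when passing from per-block multi-indices $(k^\beta,l^\beta)_{\beta\in\pi}$ to global indices $(i,j)\in G^p\times G^p$, and checking that the $a$-monomials are not double-counted across blocks; this is however a clean combinatorial identification once the enumeration of each block is fixed.
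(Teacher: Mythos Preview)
Your argument is correct and follows essentially the same route as the paper's proof. The only cosmetic difference is that the paper packages the per-block computation as $\sum_{xy}|H_{xy}|^{2m}=\sum_{ij}|\langle F_{i_1}\cdots F_{i_m},F_{j_1}\cdots F_{j_m}\rangle|^2\cdot\frac{a_{i_1}\cdots a_{i_m}}{a_{j_1}\cdots a_{j_m}}$, i.e.\ in terms of the row-product inner products appearing in the general definition of $I(\pi)$, whereas you apply the column-sum identity $\sum_i F_{ix}=N\delta_{x,0}$ directly to extract the constraint $\sum k=\sum l$; these are the same computation, and the remaining steps (product over blocks, Haar integration giving $[i]=[j]$) coincide.
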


\begin{proof}
We have the following computation:
\begin{eqnarray*}
H=F^*AF
&\implies&|H_{xy}|^2=\sum_{ij}\frac{F_{iy}F_{jx}}{F_{ix}F_{jy}}\cdot\frac{a_i}{a_j}\\
&\implies&|H_{xy}|^{2p}=\sum_{ij}\frac{F_{j_1x}\ldots F_{j_px}}{F_{i_1x}\ldots F_{i_px}}\cdot\frac{F_{i_1y}\ldots F_{i_py}}{F_{j_1y}\ldots F_{j_py}}\cdot\frac{a_{i_1}\ldots a_{i_p}}{a_{j_1}\ldots a_{j_p}}\\
&\implies&\sum_{xy}|H_{xy}|^{2p}=\sum_{ij}\left|<H_{i_1}\ldots H_{i_p},H_{j_1}\ldots H_{j_p}>\right|^2\cdot\frac{a_{i_1}\ldots a_{i_p}}{a_{j_1}\ldots a_{j_p}}
\end{eqnarray*}

But this gives the formula in the statement, and we are done.
\end{proof}

The above formula is interesting in connection with the considerations in \cite{bcs}, \cite{bns}, and with the general counting problematics for circulant Hadamard matrices \cite{haa}. See \cite{bns}.

Regarding now the glow estimates, we first have the following result:

\begin{lemma}
For $F_G$ we have the estimate
$$I(\pi)=b_1!\ldots b_{|\pi|}!N^p+O(N^{p-1})$$
where $b_1,\ldots,b_{|\pi|}$ with $b_1+\ldots+b_{|\pi|}=p$ are the block lengths of $\pi$.
\end{lemma}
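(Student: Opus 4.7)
The plan is to exploit the combinatorial count of Proposition 3.1,
$$I(\pi)=\#\left\{(i,j)\in G^p\times G^p\,\Big|\,[i]=[j],\ \sum_{r\in\beta}i_r=\sum_{r\in\beta}j_r\ \forall\beta\in\pi\right\},$$
and to isolate the leading order by a dimension argument. First I would stratify according to $\sigma=\ker i$. For any $\sigma$ with $|\sigma|\leq p-1$, the number of $i$ with $\ker i=\sigma$ is bounded by $N^{p-1}$, and for each such $i$ the multiset $[i]$ admits at most $p!$ rearrangements $j$; since there are only finitely many such $\sigma$ (the Bell number $B_p$, independent of $N$), these strata contribute $O(N^{p-1})$ in total. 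The leading contribution must therefore come from injective $i$, of which there are $N(N-1)\cdots(N-p+1)=N^p+O(N^{p-1})$.

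For such injective $i$, the condition $[i]=[j]$ forces $j_r=i_{\tau(r)}$ for a unique $\tau\in S_p$, and the block-sum constraints become $\sum_{r\in\beta}(i_r-i_{\tau(r)})=0$ for every block $\beta\in\pi$. I would then split $S_p$ into two classes. The first consists of the $\tau$ that preserve every block of $\pi$ setwise, i.e.\ $\tau(\beta)=\beta$ for all $\beta$; these satisfy every constraint automatically, and their number is exactly $b_1!\cdots b_{|\pi|}!$. Summing over injective $i$, this class contributes $b_1!\cdots b_{|\pi|}!\cdot(N^p+O(N^{p-1}))$ to $I(\pi)$, which supplies precisely the main term.

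The crux is to show that each non-block-preserving $\tau$ contributes only $O(N^{p-1})$. For such a $\tau$, pick a block $\beta$ with $\tau(\beta)\neq\beta$; after reindexing, the corresponding constraint reads $\sum_r c_r i_r=0$ on $(i_1,\ldots,i_p)\in G^p$, with coefficients $c_r=[r\in\beta]-[r\in\tau(\beta)]\in\{-1,0,1\}$ and at least one non-zero entry (any $r\in\beta\setminus\tau(\beta)$ gives $c_r=1$). Since $\pm 1$ is invertible in the abelian group $G$, one can solve uniquely for that $i_{r_0}$ in terms of the remaining $p-1$ variables, so this single equation already has exactly $N^{p-1}$ solutions in $G^p$. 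Summing over the at most $p!$ non-block-preserving permutations yields $O(N^{p-1})$, and combined with the first step this gives the claimed estimate. The main obstacle is exactly this dimension count: it relies critically on the $\pm 1$ coefficient structure (coming from the indicator decomposition) to reduce by a full factor of $N$, rather than just by the smallest prime divisor of $|G|$.
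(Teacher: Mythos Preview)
Your proof is correct and follows the same strategy as the paper's: stratify by $\sigma=\ker i$ to reduce to injective $i$, then split the permutations $\tau\in S_p$ with $j=i\circ\tau$ into block-preserving (giving the main term $b_1!\cdots b_{|\pi|}!$) versus non-block-preserving. The paper simply asserts that ``the main contribution comes from those obtained from $i$ by permuting the entries over the blocks of $\pi$'' without further justification, whereas you supply the explicit dimension count (the linear constraint $\sum_r c_r i_r=0$ with a $\pm1$ coefficient drops a full factor of $N$) that makes this rigorous.
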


\begin{proof}
With $\sigma=\ker i$ we obtain:
$$I(\pi)=\sum_{\sigma\in P(p)}\#\left\{i,j\Big|\ker i=\sigma,[i]=[j],\sum_{r\in\beta}i_r=\sum_{r\in\beta}j_r,\forall\beta\in\pi\right\}$$

Since there are $\frac{N!}{(N-|\sigma|)!}\simeq N^{|\sigma|}$ choices for $i$ satisfying $\ker i=\sigma$, and then there are $\binom{p}{\sigma}=O(1)$ choices for $j$ satisfying $[i]=[j]$, we conclude that the main contribution comes from $\sigma=|\,|\ldots|$, and so we have:
$$I(\pi)=\#\left\{i,j\Big|\ker i=|\,|\ldots|,[i]=[j],\sum_{r\in\beta}i_r=\sum_{r\in\beta}j_r,\forall\beta\in\pi\right\}+O(N^{p-1})$$

Now the condition $\ker i=|\,|\ldots|$ tells us that $i$ must have distinct entries, and there are $\frac{N!}{(N-p)!}\simeq N^p$ choices for such multi-indices $i$. Regarding now the indices $j$, the main contribution comes from those obtained from $i$ by permuting the entries over the blocks of $\pi$, and since there are $b_1!\ldots b_{|\pi|}!$ choices here, this gives the result.
\end{proof}

At the second order now, the estimate is as follows:

\begin{lemma}
For $F_G$ we have the formula
$$\frac{I(\pi)}{b_1!\ldots b_s!N^p}=1+\left(\sum_{i<j}\sum_{c\geq2}\binom{b_i}{c}\binom{b_j}{c}-\frac{1}{2}\sum_i\binom{b_i}{2}\right)N^{-1}+O(N^{-2})$$
where $b_1,\ldots,b_s$ being the block lengths of $\pi\in P(p)$.
\end{lemma}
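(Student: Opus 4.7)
The plan is to refine Lemma 3.3 by one order, extracting the $N^{-1}$ coefficient from the exact formula of Proposition 3.1. Writing $s_\beta(i):=\sum_{r\in\beta}i_r$ and parametrizing $j=i\circ\tau$ with $\tau\in S_p$ (which overcounts each pair $(i,j)$ by $|\mathrm{Stab}(i)|=\prod_v m_v(i)!$), I will rewrite
$$I(\pi)=\sum_{\tau\in S_p}\sum_{i\in G^p}\frac{1}{|\mathrm{Stab}(i)|}\prod_{\beta\in\pi}\delta_{s_\beta(i)=s_{\tau(\beta)}(i)},$$
and split the sum according to $\ker i$. Only the cases $|\ker i|=p$ (injective) and $|\ker i|=p-1$ (one pair, rest singletons) contribute down to order $N^{p-1}$; the others are $O(N^{p-2})$ just from counting $i$'s.

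For injective $i$, the block-preserving $\tau\in S_\pi:=\prod_{k=1}^sS_{b_k}$ have all block constraints trivially satisfied, contributing $|S_\pi|\cdot\tfrac{N!}{(N-p)!}=b_1!\cdots b_s!\bigl(N^p-\binom{p}{2}N^{p-1}+O(N^{p-2})\bigr)$. For $\tau\notin S_\pi$ the constraints $\sum_{r\in\tau(\beta)\setminus\beta}i_r=\sum_{r\in\beta\setminus\tau(\beta)}i_r$ form a linear system of rank $\geq1$, and among injective $i$ only rank-$1$ systems whose defining equation is satisfiable without coincidences yield an $N^{p-1}$ contribution. I will classify these as precisely the $\tau$'s swapping a $c$-subset $A\subset\beta_k$ with a $c$-subset $B\subset\beta_\ell$ ($k\ne\ell$, $c\geq2$) and acting arbitrarily on each remaining block: the case $c=1$ forces $i_a=i_b$ and is excluded by injectivity, and the identity $\sum_\beta(\mathbf{1}_{\tau(\beta)}-\mathbf{1}_\beta)=0$ together with the disjointness $\tau(\beta_k)\cap\tau(\beta_\ell)=\emptyset$ rules out any rank-$1$ collapse modifying three or more blocks. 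Each image pattern is realized by exactly $|S_\pi|$ permutations, so the total injective contribution from $\tau\notin S_\pi$ is $|S_\pi|N^{p-1}\sum_{k<\ell}\sum_{c\geq2}\binom{b_k}{c}\binom{b_\ell}{c}+O(N^{p-2})$.

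For $i$ with $\ker i=\{\{a,b\}\}\sqcup(\text{singletons})$ there are $\tfrac{N!}{(N-p+1)!}\sim N^{p-1}$ choices and $|\mathrm{Stab}(i)|=2$. For a generic such $i$ the constraint $s_\beta(i)=s_{\tau(\beta)}(i)$ is block-by-block equivalent to $\tau(\beta)=\beta$ or $\tau(\beta)\triangle\beta=\{a,b\}$ (the latter requires $\beta$ to contain exactly one of $a,b$). When $\{a,b\}\subset\beta_m$ only $\tau\in S_\pi$ qualify, giving $|S_\pi|$ valid permutations; when $a\in\beta_k, b\in\beta_\ell$ with $k\ne\ell$ both the block-preserving pattern and the one swapping $a\leftrightarrow b$ qualify, giving $2|S_\pi|$. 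Summing over the $\sum_m\binom{b_m}{2}$ same-block and $\sum_{k<\ell}b_kb_\ell$ different-block locations of the pair $\{a,b\}$, and dividing by $|\mathrm{Stab}(i)|=2$, yields the non-injective contribution $|S_\pi|N^{p-1}\bigl(\tfrac12\sum_m\binom{b_m}{2}+\sum_{k<\ell}b_kb_\ell\bigr)+O(N^{p-2})$.

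Combining the three pieces and using $\binom{p}{2}=\sum_m\binom{b_m}{2}+\sum_{k<\ell}b_kb_\ell$, the $-\binom{p}{2}$ from the leading piece exactly absorbs $\sum_{k<\ell}b_kb_\ell$ and half of $\sum_m\binom{b_m}{2}$, leaving the $N^{p-1}$ coefficient of $I(\pi)/|S_\pi|$ equal to $\sum_{k<\ell}\sum_{c\geq2}\binom{b_k}{c}\binom{b_\ell}{c}-\tfrac12\sum_m\binom{b_m}{2}$. Dividing by $|S_\pi|N^p=b_1!\cdots b_s!N^p$ gives the claimed expansion. The main obstacle will be the rank-$1$ classification in the injective case: one must check that no $\tau$ of a different shape collapses to rank $1$ with a satisfiable equation, and correctly count $|S_\pi|$ permutations per image pattern $\{\tau(\beta)\}_{\beta\in\pi}$ so as to avoid over- or undercounting.
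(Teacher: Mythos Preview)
Your argument is correct. The rank-$1$ classification in the injective case is the heart of the matter, and your justification (the constraint vectors $\mathbf 1_{\tau(\beta)}-\mathbf 1_\beta$ sum to zero, have $\{0,\pm1\}$ entries, and the positive supports $\tau(\beta)\setminus\beta$ are pairwise disjoint, so at most one vector can equal $+v$ and at most one can equal $-v$) is sound; the $c=1$ exclusion by injectivity and the $|S_\pi|$-per-pattern count are also right. The one-pair analysis is fine once you note, as you do implicitly, that the non-generic $i$'s (those satisfying an extra linear relation beyond $i_a=i_b$) are $O(N^{p-2})$ in number and hence negligible.

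The paper organizes the same computation differently. It sets $I^\circ(\pi)=\#\{(i,j):[i_r\mid r\in\beta]=[j_r\mid r\in\beta]\ \forall\beta\}$, observes the factorization $I^\circ(\pi)=\prod_{\beta}I(\sqcap\!\!\sqcap\ldots\sqcap_{|\beta|})$, and then imports the second-order expansion of the one-block integral from Lemma~2.4(3) to get the $-\tfrac12\sum_i\binom{b_i}{2}$ term in one stroke. The ``arithmetic'' remainder $I(\pi)-I^\circ(\pi)$ is then identified, exactly as you do, with the two-block $c$-swaps. Your route avoids the detour through Lemma~2.4 by splitting directly on $\ker i$ (injective vs.\ one coincident pair) and computing each piece from scratch; the paper's route is a bit more modular and makes the block-multiplicativity of the leading two orders transparent. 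Both land on the same $N^{-1}$ coefficient by the same identity $\binom{p}{2}=\sum_m\binom{b_m}{2}+\sum_{k<\ell}b_kb_\ell$.
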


\begin{proof}
Let us define the ``non-arithmetic'' part of $I(\pi)$ as follows: 
$$I^\circ(\pi)=\#\left\{i,j\Big|[i_r|r\in\beta]=[j_r|r\in\beta],\forall\beta\in\pi\right\}$$

We then have the following formula:
$$I^\circ(\pi)=\prod_{\beta\in\pi}\left\{i,j\in I^{|\beta|}\Big|[i]=[j]\right\}=\prod_{\beta\in\pi}I(\beta)$$

Also, Lemma 3.3 shows that we have the following estimate:
$$I(\pi)=I^\circ(\pi)+O(N^{p-1})$$

Our claim now is that we have the folowing formula:
$$\frac{I(\pi)-I^\circ(\pi)}{b_1!\ldots b_s!N^p}=\sum_{i<j}\sum_{c\geq2}\binom{b_i}{c}\binom{b_j}{c}N^{-1}+O(N^{-2})$$

Indeed, according to Lemma 3.3, we have a formula of the following type:
$$I(\pi)=I^\circ(\pi)+I^1(\pi)+O(N^{p-2})$$

More precisely, this formula holds indeed, with $I^1(\pi)$ coming from $i_1,\ldots,i_p$ distinct, $[i]=[j]$, and with one constraint of type $\sum_{r\in\beta}i_r=\sum_{j\in\beta}j_r$, with $[i_r|r\in\beta]\neq[j_r|r\in\beta]$. Now observe that for a two-block partition $\pi=(a,b)$ this constraint is implemented, up to permutations which leave invariant the blocks of $\pi$, as follows:
$$\begin{matrix}
i_1\ldots i_c&k_1\ldots k_{a-c}&&j_1\ldots j_c&l_1\ldots l_{a-c}\\
\underbrace{j_1\ldots j_c}_c&\underbrace{k_1\ldots k_{a-c}}_{a-c}&&\underbrace{i_1\ldots i_c}_c&\underbrace{l_1\ldots l_{a-c}}_{b-c}
\end{matrix}$$

Let us compute now $I^1(a,b)$. We cannot have $c=0,1$, and once $c\geq2$ is given, we have $\binom{a}{c},\binom{b}{c}$ choices for the positions of the $i,j$ variables in the upper row, then $N^{p-1}+O(N^{p-2})$ choices for the variables in the upper row, and then finally we have $a!b!$ permutations which can produce the lower row. We therefore obtain:
$$I^1(a,b)=a!b!\sum_{c\geq2}\binom{a}{c}\binom{b}{c}N^{p-1}+O(N^{p-2})$$

In the general case now, a similar discussion applies. Indeed, the constraint of type $\sum_{r\in\beta}i_r=\sum_{r\in\beta}j_r$ with $[i_r|r\in\beta]\neq[j_r|r\in\beta]$ cannot affect $\leq1$ blocks, because we are not in the non-arithmetic case, and cannot affect either $\geq3$ blocks, because affecting $\geq3$ blocks would require $\geq2$ constraints. Thus this condition affects exactly $2$ blocks, and if we let $i<j$ be the indices in $\{1,\ldots,s\}$ corresponding to these 2 blocks, we obtain:
$$I^1(\pi)=b_1!\ldots b_s!\sum_{i<j}\sum_{c\geq2}\binom{b_i}{c}\binom{b_j}{c}N^{p-1}+O(N^{p-2})$$

But this proves the above claim. Let us estimate now $I(\sqcap\!\!\sqcap\ldots\sqcap)$. We have:
\begin{eqnarray*}
I(\sqcap\!\!\sqcap\ldots\sqcap)
&=&p!\frac{N!}{(N-p)!}+\binom{p}{2}\frac{p!}{2}\cdot\frac{N!}{(N-p+1)!}+O(N^{p-2})\\
&=&p!N^r\left(1-\binom{p}{2}N^{-1}+O(N^{-2})\right)+\binom{p}{2}\frac{p!}{2}N^{p-1}+O(N^{p-2})\\
&=&p!N^p\left(1-\frac{1}{2}\binom{p}{2}N^{-1}+O(N^{-2})\right)
\end{eqnarray*}

Now by using the formula $I^\circ(\pi)=\prod_{\beta\in\pi}I(\beta)$, we obtain:
$$I^\circ(\pi)=b_1!\ldots b_s!N^p\left(1-\frac{1}{2}\sum_i\binom{b_i}{2}N^{-1}+O(N^{-2})\right)$$

By plugging this quantity into the above estimate, we obtain the result.
\end{proof}

In order to estimate glow, we will need the explicit formula of $I(\sqcap\sqcap)$:

\begin{lemma}
For $F_G$ with $G=\mathbb Z_{N_1}\times\ldots\times\mathbb Z_{N_k}$ we have the formula
$$I(\sqcap\sqcap)=N(4N^3-11N+2^e+7)$$
where $e\in\{0,1,\ldots,k\}$ is the number of even numbers among $N_1,\ldots,N_k$.
\end{lemma}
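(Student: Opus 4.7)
The plan is to apply Proposition 3.1 to $\pi=\sqcap\sqcap$, whose blocks are $\{1,2\}$ and $\{3,4\}$, which gives
$$I(\sqcap\sqcap)=\#\{(i,j)\in G^4\times G^4 : [i]=[j],\ i_1+i_2=j_1+j_2,\ i_3+i_4=j_3+j_4\}.$$
Since $[i]=[j]$ already forces $i_1+i_2+i_3+i_4=j_1+j_2+j_3+j_4$, the two sum conditions are equivalent, and I can drop the second.

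My strategy is to stratify the count by $\ker i$. The kernel types are: (A) all four entries distinct; (B) exactly one equal pair, with six sub-patterns according to which two positions coincide; (C) two disjoint equal pairs, three sub-patterns; (D) triple plus singleton, four sub-patterns; (E) all equal. For each $i$, the admissible $j$'s are obtained by enumerating the unordered $2$-subsets of the multiset $[i]$ whose sum equals $i_1+i_2$, then pairing each such choice with the complementary multiset as $(j_3,j_4)$; this is an elementary finite enumeration in each case.

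The arithmetic of $G$ enters through the count $N_s=\#\{a\in G:2a=s\}$, which is either $0$ or $2^e$, since the $2$-torsion of $G=\mathbb{Z}_{N_1}\times\cdots\times\mathbb{Z}_{N_k}$ has size $2^e$. The $2^e$ contributions appear in three places: in stratum (A), the count of admissible $j$'s jumps from $4$ to $8$ when the accident $i_1+i_2=i_3+i_4$ holds, and summing $(N-N_s)(N-N_s-2)$ over $s\in G$ shows that the number of all-distinct $i$'s with this accident equals $N^3-4N^2+2N+2^e N$; in the sub-class of (B) with the equal pair inside a block of $\pi$, say $i_1=i_2=x$, the count of $j$'s jumps from $2$ to $4$ when $y+z=2x$, producing $N(N-2^e)$ such accidents; and in the sub-class of (C) with pattern $(x,x,y,y)$, the count jumps from $1$ to $2$ when $2y=2x$, giving $N(2^e-1)$ such $i$'s. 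All other sub-cases contribute polynomially in $N$ with no $2^e$ dependence.

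Putting the five strata together produces the partial sums
$$4N^4-20N^3+28N^2-16N+4\cdot 2^e N,\quad 20N^3-56N^2+40N-4\cdot 2^e N,$$
$$9N^2+2^e N-10N,\quad 8N^2-8N,\quad N,$$
which add to $4N^4-11N^2+(2^e+7)N=N(4N^3-11N+2^e+7)$, as required. The hard part is not any single conceptual step but the combinatorial bookkeeping: the final coefficient $+1$ of $2^e N$ emerges as the cancellation $4-4+1$ across strata (A), (B) and (C), so any miscount in the sub-cases will break the balance.
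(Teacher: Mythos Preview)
Your argument is correct: the stratification by $\ker i$ is sound, the observation that the two block-sum constraints collapse to one under $[i]=[j]$ is valid, and I have checked that each of your five partial sums is right, including the delicate $2^e$ terms in strata (A), (B), (C) and their cancellation $4-4+1$.

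Your route is genuinely different from the paper's. The paper does not stratify by $\ker i$; instead it splits $I(\sqcap\sqcap)=I^\circ(\sqcap\sqcap)+I^{ari}(\sqcap\sqcap)$, where $I^\circ$ counts the ``non-arithmetic'' pairs with $[i_1,i_2]=[j_1,j_2]$ (and hence $[i_3,i_4]=[j_3,j_4]$), and $I^{ari}$ counts the remaining ``purely arithmetic'' pairs. The non-arithmetic part factors as $I^\circ(\sqcap\sqcap)=I(\sqcap)^2=N^2(2N-1)^2$, using the multiplicativity $I^\circ(\pi)=\prod_{\beta\in\pi}I(\beta)$, and the arithmetic part is handled by a three-case analysis according to the shape of the reduced pair $\bigl(\{i_1,i_2\},\{j_1,j_2\}\bigr)$ after common entries are erased. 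The paper's decomposition is more conceptual and scales better to general $\pi$ (it feeds directly into the $I^\circ/I^1$ expansion used in Lemma~3.4), whereas your $\ker i$ stratification is more elementary and self-contained, at the cost of heavier bookkeeping and no obvious generalization. Both are perfectly adequate for this particular lemma.
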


\begin{proof}
We use the fact that, when dealing with the conditions $\sum_{r\in\beta}i_r=\sum_{r\in\beta}j_r$ defining  the quantities $I(\pi)$, one can always erase some of the variables $i_r,j_r$, as to reduce to the ``purely arithmetic'' case, $\{i_r|r\in\beta\}\cap\{j_r|r\in\beta\}=\emptyset$. We have:
$$I(\sqcap\sqcap)=I^\circ(\sqcap\sqcap)+I^{ari}(\sqcap\sqcap)$$

Let us compute now $I^{ari}(\sqcap\sqcap)$. There are 3 contributions to this quantity, namely:

(1) \underline{Case $(^{iijj}_{jjii})$}, with $i\neq j$, $2i=2j$. Since $2(i_1,\ldots,i_k)=2(j_1,\ldots,j_k)$ corresponds to the collection of conditions $2i_r=2j_r$, inside $\mathbb Z_{N_r}$, which each have 1 or 2 solutions, depending on whether $N_r$ is odd or even, the contribution here is:
\begin{eqnarray*}
I^{ari}_1(\sqcap\sqcap)
&=&\#\{i\neq j|2i=2j\}\\
&=&\#\{i,j|2i=2j\}-\#\{i,j|i=j\}\\
&=&2^eN-N\\
&=&(2^e-1)N
\end{eqnarray*}

(2) \underline{Case $(^{iijk}_{jkii})$}, with $i,j,k$ distinct, $2i=j+k$. The contribution here is:
\begin{eqnarray*}
I^{ari}_2(\sqcap\sqcap)
&=&4\#\{i,j,k\ {\rm distinct}|2i=j+k\}\\
&=&4\#\{i\neq j|2i-j\neq i,j\}\\
&=&4\#\{i\neq j|2i\neq 2j\}\\
&=&4(\#\{i,j|i\neq j\}-\#\{i\neq j|2i=2j\})\\
&=&4(N(N-1)-(2^e-1)N)\\
&=&4N(N-2^e)
\end{eqnarray*}

(3) \underline{Case $(^{ijkl}_{klij})$}, with $i,j,k,l$ distinct, $i+j=k+l$. The contribution here is:
\begin{eqnarray*}
I^{ari}_3(\sqcap\sqcap)
&=&4\#\{i,j,k,l\ {\rm distinct}|i+j=k+l\}\\
&=&4\#\{i,j,k\ {\rm distinct}|i+j-k\neq i,j,k\}\\
&=&4\#\{i,j,k\ {\rm distinct}|i+j-k\neq k\}\\
&=&4\#\{i,j,k\ {\rm distinct}|i\neq 2k-j\}
\end{eqnarray*}

We can split this quantity over two cases, $2j\neq 2k$ and $2j=2k$, and we obtain:
\begin{eqnarray*}
I^{ari}_3(\sqcap\sqcap)
&=&4(\#\{i,j,k\ {\rm distinct}|2j\neq 2k,i\neq 2k-j\}\\
&&+\#\{i,j,k\ {\rm distinct}|2j=2k,i\neq 2k-j\})
\end{eqnarray*}

The point now is that in the first case, $2j\neq 2k$, the numbers $j,k,2k-j$ are distinct, while in the second case, $2j=2k$, we simply have $2k-j=j$. Thus, we obtain:
\begin{eqnarray*}
I^{ari}_3(\sqcap\sqcap)
&=&4\left(\sum_{j\neq k,2j\neq 2k}\#\{i|i\neq j,k,2k-j\}+\sum_{j\neq k,2j=2k}\#\{i|i\neq j,k\}\right)\\
&=&4(N(N-2^e)(N-3)+N(2^e-1)(N-2))\\
&=&4N(N(N-3)-2^e(N-3)+2^e(N-2)-(N-2))\\
&=&4N(N^2-4N+2^e+2)
\end{eqnarray*}

We can now compute the arithmetic part. This is given by:
\begin{eqnarray*}
I^{ari}(\sqcap\sqcap)
&=&(2^e-1)N+4N(N-2^e)+4N(N^2-4N+2^e+2)\\
&=&N(2^e-1+4(N-2^e)+4(N^2-4N+2^e+2))\\
&=&N(4N^2-12N+2^e+7)
\end{eqnarray*}

Thus the integral to be computed is given by:
\begin{eqnarray*}
I(\sqcap\sqcap)
&=&N^2(2N-1)^2+N(4N^2-12N+2^e+7)\\
&=&N(4N^3-4N^2+N+4N^2-12N+2^e+7)\\
&=&N(4N^3-11N+2^e+7)
\end{eqnarray*}

Thus we have reached to the formula in the statement, and we are done.
\end{proof}

We have the following asymptotic result:

\begin{theorem}
The glow of $F_G$, with $|G|=N$, is given by 
$$\frac{1}{p!}\int_{\mathbb T^N\times\mathbb T^N}\left(\frac{|\Omega|}{N}\right)^{2p}=1-K_1N^{-1}+K_2N^{-2}-K_3N^{-3}+O(N^{-4})$$ 
with $K_1=\binom{p}{2}$, $K_2=\binom{p}{2}\frac{3p^2+p-8}{12}$, $K_3=\binom{p}{3}\frac{p^3+4p^2+p-18}{8}$.
\end{theorem}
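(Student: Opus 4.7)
The plan is to push the moment formula of Proposition 2.2 to precision $O(N^{-4})$ by enumerating and evaluating the partitions contributing at each order $N^0, N^{-1}, N^{-2}, N^{-3}$.

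First, I truncate. Since $|I(\pi)|\lesssim p!\,N^p$ by Lemma 2.5(3), a partition $\pi\in P(p)$ with $|\pi|=p-k$ contributes at most $N^{-k}$ to $\int|\Omega|^{2p}/(p!\,N^{2p})$, so only $|\pi|\in\{p,p-1,p-2,p-3\}$ need to be retained. The multiplicativity of $\widetilde K$ (Lemma 2.3(1)) and of $I$ in singletons (Lemma 2.5(2)) allow me to group each such $\pi$ by the multiset of its non-singleton block sizes, yielding exactly six non-trivial ``cores'': at $k=1$, one $2$-block ($\binom{p}{2}$ partitions); at $k=2$, one $3$-block ($\binom{p}{3}$) or two $2$-blocks ($3\binom{p}{4}$); at $k=3$, one $4$-block ($\binom{p}{4}$), a $3$-block plus a $2$-block ($\binom{p}{3}\binom{p-3}{2}$), or three $2$-blocks ($15\binom{p}{6}$).

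Second, for each core I compute $K(\pi)=p!\prod_\beta\kappa_{|\beta|}$ where $\kappa_n:=K(1_n)/n!$ is the normalized value on the one-block partition of size $n$; Lemma 2.3(2) gives $\kappa_1=1,\ \kappa_2=-1/2,\ \kappa_3=2/3,\ \kappa_4=-11/8$. Then I evaluate $I(\pi)=N^s I(\text{core})$ to the depth required by the order of $N$ involved: for the $k=1$ core, the exact value $I(\sqcap)=2N^2-N$ from a direct $\ker i$ sum; for the $k=2$ one-$3$-block core, the exact value $I=6N^3-9N^2+4N$ by the same method; for the $k=2$ two-$2$-block core, the Fourier-specific Lemma 3.5, $I(\sqcap\sqcap)=N(4N^3-11N+2^e+7)$; and for the three $k=3$ cores, only the leading value $b_1!\cdots b_s!\,N^p$ from Lemma 3.3, namely $24N^p,\,12N^p,\,8N^p$. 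Summing the six contributions of the shape $(\#\text{partitions})\cdot (K(\pi)/p!)\cdot N^{|\pi|}I(\pi)/N^{2p}$ and collecting powers of $N^{-1}$ yields
$$K_1=\binom{p}{2},\qquad K_2=\frac12\binom{p}{2}+4\binom{p}{3}+3\binom{p}{4},$$
$$K_3=6\binom{p}{3}+33\binom{p}{4}+4\binom{p}{3}\binom{p-3}{2}+15\binom{p}{6},$$
which I would then verify to coincide with the stated closed forms by a routine polynomial simplification (cross-checked at $p=2,\ldots,6$). Universality is then automatic: the only $G$-dependent quantity in the whole computation is the $2^e$ of Lemma 3.5, which appears in $I(\sqcap\sqcap)$ only at scale $N$, hence contributes to the normalized moment at scale $N^{-5}$, safely inside the $O(N^{-4})$ remainder.

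The main obstacle is the bookkeeping around the fact that $I$ is not multiplicative over disjoint non-singleton cores (only the non-arithmetic piece $I^\circ$ of the Lemma 3.4 proof is), so one cannot naively write $I(\pi)=\prod_\beta I(\text{block})$. This is neutralized here because at $k=3$ only the leading non-arithmetic term $\prod b_i!\,N^p$ is needed, while at $k\leq 2$ each core is either a single block, where the arithmetic constraint is trivially redundant, or the symmetric two-equal-$2$-block case, for which Lemma 3.5 already delivers the exact integral including all arithmetic corrections.
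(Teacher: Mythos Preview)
Your proposal is correct and follows essentially the same route as the paper: truncate Proposition~2.2 to partitions with $|\pi|\ge p-3$, group by the non-singleton core, compute the $\widetilde K$ factors via Lemma~2.3, obtain the single-block integrals $I(\sqcap),I(\sqcap\!\sqcap)$ exactly from Lemma~2.4, use Lemma~3.5 for $I(\sqcap\sqcap)$, and keep only the Lemma~3.3 leading term for the three $k=3$ cores. Your expressions for $K_2,K_3$ agree with the paper's (since $4\binom{p}{3}\binom{p-3}{2}=40\binom{p}{5}$), and your remark that the $2^e$ from Lemma~3.5 enters only at scale $N^{-5}$ is exactly why the expansion is $G$-independent through order~$N^{-3}$.
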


\begin{proof}
We use the quantities $\widetilde{K}(\pi)=\frac{K(\pi)}{p!},\widetilde{I}(\pi)=\frac{I(\pi)}{N^p}$, which are such that $\widetilde{K}(\pi|\ldots|)=\widetilde{K}(\pi),\widetilde{I}(\pi|\ldots|)=\widetilde{I}(\pi)$. In terms of $J(\sigma)=\binom{p}{\sigma}\widetilde{K}(\sigma)\widetilde{I}(\sigma)$, we have:
\begin{eqnarray*}
\frac{1}{p!}\int_{\mathbb T^N\times\mathbb T^N}|\Omega|^{2p}
&=&J(\emptyset)+N^{-1}J(\sqcap)+N^{-2}\left(J(\sqcap\!\sqcap)+J(\sqcap\sqcap)\right)\\
&&+N^{-3}\left(J(\sqcap\!\!\sqcap\!\!\sqcap)+J(\sqcap\!\!\sqcap\sqcap)+J(\sqcap\sqcap\sqcap)\right)+O(N^{-4})
\end{eqnarray*}

We have $\widetilde{K}_0=\widetilde{K}_1=1$, $\widetilde{K}_2=\frac{1}{2}-1=-\frac{1}{2}$, $\widetilde{K}_3=\frac{1}{6}-\frac{3}{2}+2=\frac{2}{3}$ and:
$$\widetilde{K}_4=\frac{1}{24}-\frac{4}{6}-\frac{3}{4}+\frac{12}{2}-6=-\frac{11}{8}$$

Regarding now the numbers $C_{pr}$ in Lemma 2.3, these are given by:
$$C_{p1}=1,C_{p2}=\frac{1}{2}\binom{2p}{p}-1,\ldots,C_{p,p-1}=\frac{p!}{2}\binom{p}{2},C_{pp}=p!$$

We deduce that $I(|)=N$, $I(\sqcap)=N(2N-1)$, $I(\sqcap\!\sqcap)=N(6N^2-9N+4)$ and:
$$I(\sqcap\!\!\sqcap\!\!\sqcap)=N(24N^3-72N^2+82N-33)$$

By using as well Lemma 3.4 and Lemma 3.5, we obtain the following formula:
\begin{eqnarray*}
\frac{1}{p!}\int_{\mathbb T^N\times\mathbb T^N}|\Omega|^{2p}
&=&1-\frac{1}{2}\binom{p}{2}(2N^{-1}-N^{-2})+\frac{2}{3}\binom{p}{3}(6N^{-2}-9N^{-3})+3\binom{p}{4}N^{-2}\\
&&-33\binom{p}{4}N^{-3}-40\binom{p}{5}N^{-3}-15\binom{p}{6}N^{-3}+O(N^{-4})
\end{eqnarray*}

But this gives the formulae of $K_1,K_2,K_3$ in the statement, and we are done.
\end{proof}

\section{Concluding remarks}

We have seen in this paper that the glow of the Fourier matrices, an invariant which is related to a wide array of Hadamard matrix questions, is complex Gaussian in the $N\to\infty$ limit, and that the universality holds in fact up to order 4. Any potential application, however, would require a much deeper understanding of the glow.

We believe that the formula in Theorem 3.6 should appear as truncation of the $N^{-1}$ expansion of the moment formula for some ``universal'' measure $\mu_N$. However, we do not know for the moment on how to approach $\mu_N$. One problem here is that we have two natural candidates for this measure, one coming from the Fourier matrix $F_N$, and the other one coming from the Walsh matrix $W_N$ (defined only for values $N=2^n$):
\begin{enumerate}
\item Regarding $F_N$, the formula in Lemma 3.5 shows that the next term $K_4$ depends on the parity of $N$. One could conjecture then that $K_4$ might be polynomial both in $N$ odd, and in $N$ even, but we have computer-assisted results showing that it is not so. It is not clear on how to advance in this direction.

\item Regarding $W_N$, where $N=2^n$ and the underlying group is $G=\mathbb Z_2^n$, here the numbers $C_I(J_1,\ldots,J_r)=\#\left\{(a_i)_{i\in I}\in G\ {\rm distinct}\Big|\sum_{j\in J_s}a_j=0,\forall s\right\}$ are polynomial in $N=2^n$, and this suggests that the integrals $I(\pi)$, and hence the glow, should be polynomial in $N$. However, we don't have a full proof of this fact.
\end{enumerate}

As a conclusion, the results in the present paper, along with the data coming from some extra computer simulations and computations, suggest the following key problem, that we would like to raise here: what is the glow of the Walsh matrices?


\begin{thebibliography}{99}

\bibitem{ban}T. Banica, First order deformations of the Fourier matrix, {\em J. Math. Phys.} {\bf 55} (2014), 1--22.

\bibitem{bcs}T. Banica, B. Collins and J.-M. Schlenker, On orthogonal matrices maximizing the 1-norm, {\em Indiana Univ. Math. J.} {\bf 59} (2010), 839--856.

\bibitem{bfs}T. Banica, U. Franz and A. Skalski, Idempotent states and the inner linearity property, {\em Bull. Pol. Acad. Sci. Math.} {\bf 60} (2012), 123--132.

\bibitem{bns}T. Banica, I. Nechita and J.-M. Schlenker, Analytic aspects of the circulant Hadamard conjecture, {\em Ann. Math. Blaise Pascal} {\bf 21} (2014), 25--59.

\bibitem{ben}I. Bengtsson, Three ways to look at mutually unbiased bases, {\em AIP Conf. Proc.} {\bf 889} (2007), 40--51.

\bibitem{cgl}B. Collins, P. Gawron, A.E. Litvak and K. \.Zyczkowski, Numerical range for random matrices, {\em J. Math. Anal. Appl.} {\bf 418} (2014) 516--533.

\bibitem{dgh}C.F. Dunkl, P. Gawron, J.A. Holbrook, J. Miszczak, Z. Pucha\l a and K. \.Zyczkowski, Numerical shadow and geometry of quantum states, {\em J. Phys. A} {\bf 44} (2011), 1--19.

\bibitem{fsl}P.C. Fishburn and N.J.A. Sloane, The solution to Berlekamp's switching game, {\em Discrete Math.} {\bf 74} (1989), 263--290. 

\bibitem{haa}U. Haagerup, Cyclic $p$-roots of prime lengths $p$ and related complex Hadamard matrices, {\tt arxiv: 0803.2629}.

\bibitem{jsu}V.F.R. Jones and V.S. Sunder, Introduction to subfactors, Cambridge Univ. Press (1997).

\bibitem{pop}S. Popa, Orthogonal pairs of $*$-subalgebras in finite von Neumann algebras, {\em J. Operator Theory} {\bf 9} (1983), 253--268.

\bibitem{rvi}R. Roth and K. Viswanathan, On the hardness of decoding the Gale-Berlekamp code, {\em IEEE Trans. Inform. Theory} {\bf 54} (2008), 1050--1060. 

\bibitem{sya}J. Seberry and M. Yamada, Hadamard matrices, sequences, and block designs, Wiley (1992).

\bibitem{tzy}W. Tadej and K. \.Zyczkowski, A concise guide to complex Hadamard matrices, {\em Open Syst. Inf. Dyn.} {\bf 13} (2006), 133--177.

\end{thebibliography}
\end{document}